\numberwithin{equation}{section} 
\newenvironment{pdeq}{ \left\{ \begin{aligned}}{\end{aligned}\right.}
\newcommand{\eqrefsub}[2]{\ensuremath{\eqref{#1}_{#2}}}
\newcommand{\np}[1]{(#1)}
\newcommand{\nb}[1]{[#1]}
\newcommand{\bp}[1]{\big(#1\big)}
\newcommand{\bb}[1]{\big[#1\big]}
\newcommand{\Bp}[1]{\bigg(#1\bigg)}
\newcommand{\Bb}[1]{\bigg[#1\bigg]}
\newcommand{\calb}{{\mathcal B}}
\newcommand{\calp}{{\mathcal P}}
\newcommand{\calt}{{\mathcal T}}
\newcommand{\R}{\mathbb{R}}
\newcommand{\Q}{\mathbb{Q}}
\newcommand{\C}{\mathbb{C}}
\newcommand{\Z}{\mathbb{Z}}
\newcommand{\N}{\mathbb{N}}
\DeclareMathOperator{\e}{e}
\DeclareMathOperator{\Div}{div}
\DeclareMathOperator{\supp}{supp}
\DeclareMathOperator{\dist}{dist}
\newcommand{\embeds}{\hookrightarrow}
\newcommand{\ra}{\rightarrow}
\newcommand{\wto}{\rightharpoonup}
\newcommand{\set}[1]{\ensuremath{\{#1\}}}
\newcommand{\setc}[2]{\ensuremath{\{#1 : #2\}}}
\newcommand{\setcl}[2]{\ensuremath{\bigl\{#1 : #2\bigr\}}}
\newcommand{\setcL}[2]{\ensuremath{\biggl\{#1\, :\, #2\biggr\}}}
\newcommand{\ball}{B}
\renewcommand{\restriction}[2]{#1\big | _{#2}}
\newcommand{\proj}{\calp}
\newcommand{\projhOm}{\proj_\Omega}
\newcommand{\projhm}{\proj_{\Omega_m}}
\newcommand{\grp}{G}
\newcommand{\dualgrp}{\widehat{G}}
\newcommand{\grph}{H}
\newcommand{\torus}{{\mathbb T}}
\newcommand{\transpose}{\top}
\newcommand{\idmatrix}{I}
\newcommand{\grad}{\nabla}
\newcommand{\pt}{\partial_t}
\newcommand{\dx}{{\mathrm d}x}
\newcommand{\dt}{{\mathrm d}t}
\newcommand{\dxi}{{\mathrm d}\xi}
\newcommand{\SR}{\mathscr{S}}
\newcommand{\TDR}{\mathscr{S^\prime}}
\newcommand{\FT}{\mathscr{F}}
\newcommand{\iFT}{\mathscr{F}^{-1}}
\newcommand{\AR}{\mathrm{A}}
\newcommand{\norm}[1]{\lVert#1\rVert}
\newcommand{\norml}[1]{\bigl\lVert#1\bigr\rVert}
\newcommand{\snorm}[1]{{\lvert #1 \rvert}}
\newcommand{\snorml}[1]{{\bigl\lvert #1 \big\rvert}}
\newcommand{\snormL}[1]{{\Bigl\lvert #1 \Big\rvert}}
\newcommand{\WSR}[2]{\mathrm{W}^{#1,#2}} 
\newcommand{\WSRN}[2]{\mathrm{W}^{#1,#2}_0}
\newcommand{\WSRloc}[2]{\mathrm{W}^{#1,#2}_{\mathrm{loc}}} 
\newcommand{\CR}[1]{\mathrm{C}^{#1}}  
\newcommand{\LR}[1]{\mathrm{L}^{#1}}
\newcommand{\lR}[1]{\ell^{#1}}
\newcommand{\LRloc}[1]{\mathrm{L}^{#1}_{\mathrm{loc}}} 
\newcommand{\CRi}{\CR \infty}
\newcommand{\CRci}{\CR \infty_0}
\newcommand{\LRsigma}[1]{\mathrm{L}^{#1}_{\sigma}}
\newcommand{\XT}{{\mathcal X_{\tay}^q}}
\newcommand{\Xs}{{\mathrm X_{\tay,s}^q}}
\newcommand{\Xk}{{\mathrm X_{\tay,\perf k}^q}}
\newcommand{\XreyT}{{\mathcal X_{\rey,\tay}^q}}
\newcommand{\YT}{\mathcal Y^q}
\newcommand{\Ys}{\mathrm Y^q}
\newcommand{\nsnonlinb}[2]{#1\cdot\grad #2}
\newcommand{\vvel}{v}
\newcommand{\vpres}{p}
\newcommand{\wvel}{w}
\newcommand{\wpres}{\mathfrak{q}}
\newcommand{\twvel}{\widetilde{w}}
\newcommand{\twpres}{\widetilde{\pi}}
\newcommand{\uvel}{u}
\newcommand{\upres}{\mathfrak{p}}
\newcommand{\tuvel}{\widetilde{u}}
\newcommand{\tupres}{\widetilde{\mathfrak{p}}}
\newcommand{\rotterm}[1]{\np{ \eone\wedge #1 - \nsnonlinb{\eone\wedge x}{#1}}}
\newcommand{\rottermsimple}[1]{ \eone\wedge #1 - \nsnonlinb{\eone\wedge x}{#1} }
\newcommand{\tin}{\text{in }}
\newcommand{\tif}{\text{if }}
\newcommand{\ton}{\text{on }}
\newcommand{\tfor}{\text{for }}
\renewcommand{\epsilon}{\varepsilon}
\renewcommand{\phi}{\varphi}
\newcommand{\rey}{\lambda}
\newcommand{\tay}{\omega}
\newcommand{\per}{\calt}
\newcommand{\perf}{\frac{2\pi}{\per}}
\newcommand{\perfs}{\tfrac{2\pi}{\per}}
\newcommand{\eone}{\e_1}
\newcommand{\rotmatrix}{Q}
\newcommand{\cutoff}{\chi}
\newcommand{\change}[1]{}
\theoremstyle{plain}
\newtheorem{thm}{Theorem}[section]
\newtheorem{lem}[thm]{Lemma}
\theoremstyle{remark}
\newtheorem{rem}[thm]{Remark}
\begin{document}
\title{On the Stokes-type resolvent problem
associated with
time-periodic flow
around a rotating obstacle}

\author{Thomas Eiter%
\footnote{%
Weierstrass Institute for Applied Analysis and Stochastics,
Mohrenstra\ss{}e 39, 10117 Berlin, Germany.
Email: {\texttt{thomas.eiter@wias-berlin.de}}} 
}

\maketitle

\begin{abstract}
Consider the resolvent problem associated with the linearized viscous flow around a rotating body.
Within a setting of classical Sobolev spaces,
this problem is not well posed on the whole imaginary axis.
Therefore, a framework of homogeneous Sobolev spaces is introduced
where existence of a unique solution can be guaranteed for every purely imaginary resolvent parameter.
For this purpose, the problem is reduced to an auxiliary problem,
which is studied by means of Fourier analytic tools in a group setting.
In the end, uniform resolvent estimates can be derived, 
which lead to the existence of solutions to the associated time-periodic linear problem. 
\end{abstract}

\noindent
\textbf{MSC2020:} 
76D07, 
76U05, 
47A10, 
35B10, 
76D05, 
35Q30. 
\\
\noindent
\textbf{Keywords:} Stokes flow, rotating obstacle, resolvent problem, time-periodic solutions.

\section{Introduction}

The present article is mainly concerned with the study of 
the problem
\begin{equation}\label{sys:Stokes.rot.res}
\begin{pdeq}
is\vvel+\tay\rotterm{\vvel} 
- \Delta \vvel
+ \grad \vpres
 &= g
&& \tin\Omega, \\
\Div\vvel&=0
&& \tin\Omega, \\
\vvel&=0
&& \ton \partial\Omega
\end{pdeq}
\end{equation}
in a three-dimensional exterior domain $\Omega\subset\R^3$.
Here
$s\in\R$ and $\tay>0$ are given parameters, 
$g\colon\Omega\to\R^3$ is a given vector field,
and $\vvel\colon\Omega\to\R^3$ and $\vpres\colon\Omega\to\R$ are 
the unknown functions.
Then \eqref{sys:Stokes.rot.res} can be regarded as a resolvent problem
with a purely imaginary resolvent parameter $is$, $s\in\R$.
Problem \eqref{sys:Stokes.rot.res} naturally arises when 
studying the associated time-periodic problem 
\begin{equation}\label{sys:Stokes.rot.tp}
\begin{pdeq}
\pt\uvel+\tay\rotterm{\uvel} 
- \Delta \uvel
+ \grad \upres
 &= f
&& \tin \torus\times\Omega, \\
\Div\uvel&=0
&& \tin \torus\times\Omega, \\
\uvel&=0
&& \ton \torus\times\partial\Omega.
\end{pdeq}
\end{equation}
System \eqref{sys:Stokes.rot.tp} may be regarded as the linearization of 
the nonlinear problem
\begin{equation}\label{sys:NS.rot.tp}
\begin{pdeq}
\partial_t \uvel + \tay\rotterm{\uvel} 
+ \uvel\cdot\grad\uvel
&= f + \Delta \uvel - \grad \upres 
&& \tin \torus\times\Omega, \\
\Div\uvel&=0
&& \tin \torus\times\Omega, \\
\uvel&=\tay\eone\wedge x
&& \ton \torus\times\partial\Omega, \\
\lim_{\snorm{x}\to\infty} \uvel(t,x) &= 0
&& \tfor t\in \torus,
\end{pdeq}
\end{equation}
which
describes the time-periodic flow of a viscous incompressible fluid around 
a rotating rigid body $\calb\coloneqq\R^3\setminus\Omega$ in the three-dimensional space.
More precisely, here we assume that the fluid adheres to the boundary of $\calb$ 
and is at rest at infinity,
and that the body rotates about the $x_1$-axis
with (scalar) rotational velocity $\tay>0$.
Then the motion of the fluid flow,
described in a frame attached to the body, 
is governed by \eqref{sys:NS.rot.tp}.
The functions $\uvel\colon\torus\times\Omega\to\R^3$ and $\upres\colon\torus\times\Omega\to\R$ 
are velocity and pressure fields,
and $f\colon\torus\times\Omega\to\R^3$ is an external body force.
In \eqref{sys:Stokes.rot.tp} and \eqref{sys:NS.rot.tp} we choose $\torus\coloneqq\R/\per\Z$ for $\per>0$
as the time axis,
so that all occurring functions 
are intrinsically time periodic.
Observe that 
in the formulation of \eqref{sys:Stokes.rot.res} and \eqref{sys:Stokes.rot.tp}
we omitted the condition \eqrefsub{sys:NS.rot.tp}{4} at infinity,
which is later incorporated in the definition of the function spaces in a generalized sense.

Concerning the analysis of the nonlinear time-periodic problem \eqref{sys:NS.rot.tp},
the first result was given by Galdi and Silvestre
\cite{GaldiSilvestre_ExistenceTPSolutionsNSAroundMovingBody_2006},
who showed the existence of weak solution in the more general configuration 
where the rigid body performs a time-periodic motion.
However, in their functional framework the spatial asymptotic properties 
of the flow were not captured.
This problem was recently solved by Galdi \cite{Galdi2020_ExistenceUniquenessAsBehRegularTPViscFlowAroundMovingBodyRotCase},
who showed existence of regular solutions satisfying certain pointwise decay estimates.
A different approach to characterize the spatial behavior of solutions
is inspired by the fundamental work of Yamazaki \cite{Yamazaki2000},
who showed existence of time-periodic solutions to \eqref{sys:NS.rot.tp}
in the case $\tay=0$ in a framework of $\LR{3,\infty}$ spaces,
also known as weak-$\LR{3}$ spaces.
His analysis was based on well-known $\LR{p}$-$\LR{q}$ estimates for the Stokes semigroup.
For $\tay>0$ analogous $\LR{p}$-$\LR{q}$ estimates for the semigroup associated with the initial-value problem
corresponding to \eqref{sys:Stokes.rot.tp}
were shown 
by Hishida and Shibata \cite{HishidaShibata_LpLqEstimatesStokesOperatorRotatingObstacle_2009},
so that Yamazaki's method also leads to solutions to \eqref{sys:NS.rot.tp} in the $\LR{3,\infty}$ framework.
Later, 
Geissert, Hieber and Nguyen \cite{GeissertHieberNguyen_TP2016}
developed a semigroup-based approach in a general framework,
where this analysis was carried out as a special case
that lead to existence of mild time-periodic solutions to \eqref{sys:NS.rot.tp}.

With regard to the linearized time-periodic problem \eqref{sys:Stokes.rot.tp},
observe that for $\tay=0$, 
it reduces to the well-known Stokes problem.
In this case 
the unique existence of time-periodic solutions,
which satisfy suitable \textit{a priori} estimates,
was successfully derived
in \cite{GaldiKyed_TPflowViscLiquidpBody_2018}.
The aim of the present article is to establish
a similar result in the case $\tay>0$.
Observe that the additional rotation term $\tay\rotterm{\uvel}$ for $\tay>0$
cannot be treated as a lower-order perturbation of the Laplace operator
because the term $\eone\wedge x\cdot\grad$ 
is a differential operator with unbounded coefficient.
Therefore, for the derivation of \textit{a priori} estimates, 
this term has to be handled in a different way.
One suitable method was recently developed 
in \cite{KyedGaldi_asplqesoserfI,KyedGaldi_asplqesoserfII}
and is roughly described as follows:
The rotation term $\tay\rotterm{\uvel}$ in 
\eqref{sys:Stokes.rot.tp} and \eqref{sys:NS.rot.tp}
stems from the change of coordinates from an inertial frame
to a rotating frame. 
Undoing this transformation, 
one can simply absorb this term again.
However, in general
this leads to a problem on a time-dependent spatial domain. 
Therefore, the idea is to first employ this procedure in the setting of the whole space $\Omega=\R^3$,
where the domain is invariant, 
and to use cut-off techniques to return to the case of an exterior domain afterwards.
While in \cite{KyedGaldi_asplqesoserfI,KyedGaldi_asplqesoserfII} 
steady motions were investigated,
in the recent article \cite{EiterKyed_ViscousFlowAroundRigidBodyPerformingTPMotion_2021}
the described method was successfully applied to the time-periodic problem 
\begin{equation}\label{sys:Oseen.rot.tp}
\begin{pdeq}
\pt\uvel+\tay\rotterm{\uvel} 
+\rey\partial_1\uvel
- \Delta \uvel
+ \grad \upres
 &= f
&& \tin \torus\times\Omega, \\
\Div\uvel&=0
&& \tin \torus\times\Omega, \\
\uvel&=0
&& \ton \torus\times\partial\Omega
\end{pdeq}
\end{equation}
for $\rey\neq0$.
System \eqref{sys:Oseen.rot.tp} differs from \eqref{sys:Stokes.rot.tp}
by the term $\rey\partial_1\uvel$,
which arises when the body $\calb$ performs, besides a rotation, an additional translation
with velocity $\rey\eone$.
However, in \cite{EiterKyed_ViscousFlowAroundRigidBodyPerformingTPMotion_2021}
well-posedness of \eqref{sys:Oseen.rot.tp} with $\rey\neq0$
was merely shown under the restriction that the time period $\per$
and the angular velocity $\tay$ are related by $\perf=\tay$.
The results in the forthcoming paper \cite{Eiter_OseenResProblemTPFlowRotatingObstacle}
show that 
this restriction is not necessary
but can be weakened and replaced with $\perf/\tay\in\Q$.
One main observation of this article
is that in the present situation,
that is, in the case $\rey=0$,
where \eqref{sys:Oseen.rot.tp} reduces to \eqref{sys:Stokes.rot.tp},
such an assumption is not necessary at all,
and we provide a framework of well-posedness for any $\tay,\per>0$
without further restrictions.

To this end, 
the major part of the subsequent analysis is focused on the 
resolvent problem \eqref{sys:Stokes.rot.res}.
Observe that
if $\np{\uvel,\upres}$ is a $\per$-periodic solution to 
\eqref{sys:Stokes.rot.tp},
then the Fourier coefficient of order $k$ 
is a solution to \eqref{sys:Stokes.rot.res} with $s=\perf k$.
This explains why we restrict our analysis to
purely imaginary resolvent parameters $is$, $s\in\R$.
Moreover, since we want to choose arbitrary time periods $\per>0$,
we need well-posedness of the resolvent problem \eqref{sys:Stokes.rot.res} for all $s\in\R$.

At first glance, 
it may seem reasonable to analyze \eqref{sys:Stokes.rot.res}
as the resolvent problem $is\vvel+A_\tay\vvel=g$ of the closed operator 
$A_\tay\colon D(A_\tay)\subset\LRsigma{q}(\Omega)\to\LRsigma{q}(\Omega)$
given by
\begin{align}
D(A_\tay)&\coloneqq\setcl{\vvel\in\LRsigma{q}(\Omega)\cap\WSRN{1}{q}(\Omega)^3\cap\WSR{2}{q}(\Omega)^3}{\eone\wedge x\cdot\grad\vvel\in\LR{q}(\Omega)^3},
\label{eq:def.DAtay}
\\
A_\tay\vvel&\coloneqq \projhOm\bb{\tay\rotterm{\vvel} 
- \Delta \vvel},
\label{eq:def.Atay}
\end{align}
where $\LRsigma{q}(\Omega)$, $q\in(1,\infty)$ is the 
space of all solenoidal functions in $\LR{q}(\Omega)^3$,
and $\projhOm$ is the associated Helmholtz projection.
Farwig, Ne\v{c}asov\'{a} and Neustupa \cite{FarwigNecasovaNeustupa2007_EssSpectrumStokesOpRotatingBodyLq}
could show that the essential spectrum of $A_\tay$ is given by
\begin{equation}
\label{eq:spectrum.usual}
\sigma_{\text{ess}}\bp{A_\tay}=\setcl{\alpha+i\tay\ell}{\alpha\leq 0,\ \ell\in\Z}.
\end{equation}
In particular, 
we see that
$is$, $s\in\R$,
does not belong to the resolvent set of $A_\tay$ in general,
and this setting does not provide a framework for
well-posedness of \eqref{sys:Stokes.rot.res}
if $s\in\tay\Z$.
Since, as explained above, we need such a framework
in order to solve the time-periodic problem \eqref{sys:Stokes.rot.tp},
we introduce a different functional setting instead, 
namely a setting of homogeneous Sobolev spaces
that renders \eqref{sys:Stokes.rot.res} well posed 
for arbitrary $s\in\R$.
One peculiarity of the derived \textit{a priori} estimate is that
instead of the classical form
\begin{equation}\label{est:resest.usual}
\snorm{s}\norm{\vvel}_{q}+\norm{A_\tay\vvel}_{q}
\leq C\norm{g}_{q},
\end{equation} 
we deduce the non-classical resolvent estimate
\[
\norm{is\vvel+\tay\rotterm{\vvel}}_{q}
+\norm{\Delta \vvel}_{q}
\leq C\norm{g}_{q},
\] 
see \eqref{est:Stokes.rot.res} below.
In particular, we do not obtain separate estimates 
of the terms $is\vvel$ and $\tay\rotterm{\vvel}$
or even of $is\vvel$, $\tay\eone\wedge\vvel$ and $\tay\eone\wedge x\cdot\grad\vvel$.
This is not a surprise 
since a separate estimate of $is\vvel$ would lead to \eqref{est:resest.usual} for all $s\in\R$,
which would contradict \eqref{eq:spectrum.usual}.
Moreover, it is well known that separate estimates of $\tay\eone\wedge\vvel$ and $\tay\eone\wedge x\cdot\grad\vvel$
are not even feasible for the steady-state problem, 
that is, for \eqref{sys:Stokes.rot.res} with $s=0$;
see \cite[Theorem VIII.7.2]{GaldiBookNew} for example.

The analysis of the resolvent problem \eqref{sys:Stokes.rot.res}
for an exterior domain $\Omega\subset\R^3$
goes back to Hishida \cite{Hishida_StokesOperatorRotationEffectExteriorDomain_1999},
who derived suitable resolvent estimates in an $\LR{2}$ framework
that showed that the operator $A_\tay$ generates a contractive $\CR{0}$-semigroup if $q=2$.
For general $q\in(1,\infty)$ a similar statement in the $\LR{q}$ setting was later proved 
by Geissert, Heck and Hieber
\cite{GeissertHeckHieber_LpTheoryNSFlowExteriorMovingRotatingObstacle_2006}.
However, since
the resolvent estimate \eqref{est:resest.usual} is invalid on for all $is$ with $s\in\tay\Z$,
the operator $A_\tay$ does not generate an analytic semigroup.
Nevertheless,
one can derive additional smoothing properties of the semigroup
that allow to establish solutions to the nonlinear initial-value 
problem \cite{Hishida_ExistenceTheoremNavierStokesExteriorRotatingObstacle_1999,
GeissertHeckHieber_LpTheoryNSFlowExteriorMovingRotatingObstacle_2006}
and to carry out a stability analysis of steady-state solutions,
as was done by Hishida and Shibata \cite{HishidaShibata_DecayEstStokesFlowRotatingObstacle_2007,
HishidaShibata_LpLqEstimatesStokesOperatorRotatingObstacle_2009}.
Moreover, the investigation of the spectrum of the operator $A_\tay$ 
was further deepened by Farwig, Ne\v{c}asov\'{a} and Neustupa
\cite{FarwigNeustupa_SpectrumStokesTypeOpFlowAroundRotatingBody_2007, FarwigNecasovaNeustupa2007_EssSpectrumStokesOpRotatingBodyLq, FarwigNecasovaNeustupa2011_SpectralAnalysisStokesOpRotatingBody}.

As explained above, 
in our investigation of the resolvent problem \eqref{sys:Stokes.rot.res}
we follow a different approach
and investigate \eqref{sys:Stokes.rot.res} in a 
different functional framework.
Our analysis is based on the study of the auxiliary problem
\begin{equation}\label{sys:Stokes.tp.aux.intro}
is\uvel
+\pt\uvel
- \Delta \uvel
+ \grad \upres 
= f,
\quad
\Div\uvel
=0 
\qquad\tin\torus\times\R^3,
\end{equation}
which may be regarded as a mixture of the Stokes resolvent problem with the time-periodic Stokes problem.
In contrast to \eqref{sys:Stokes.rot.res} and \eqref{sys:Stokes.rot.tp},
we can directly derive a formula for the solution to \eqref{sys:Stokes.tp.aux.intro}
by means of a Fourier multiplier on $\torus\times\R^3$.
Using tools from harmonic analysis in this group setting,
we can further deduce suitable $\LR{q}$ estimates.
By means of the aforementioned transformation, 
we can then introduce the rotational terms
and relate the resolvent problem \eqref{sys:Stokes.rot.tp}
to problem \eqref{sys:Stokes.tp.aux.intro}.

This article is structured as follows: 
After introducing some notation in Section \ref{sec:Notation},
we state our main results on the well-posedness of
the resolvent problem
\eqref{sys:Stokes.rot.res} 
and
the time-periodic problem \eqref{sys:Stokes.rot.tp} 
in Section \ref{sec:MainResults}.
In Section \ref{sec:resprob.wholespace} 
we study the resolvent problem \eqref{sys:Stokes.rot.res}
in the case of the whole space $\Omega=\R^3$,
which is based on the examination of the auxiliary time-periodic problem
\eqref{sys:Stokes.tp.aux.intro}.
In the subsequent Section \ref{sec:resprob.extdom}
these findings are transferred to the case of an exterior domain.
Finally, in Section \ref{sec:tpprob}
we show the existence of a unique solution 
to the time-periodic problem \eqref{sys:Stokes.rot.tp}
in a framework of functions with absolutely convergent Fourier series.

\section{Notation}
\label{sec:Notation}
In order to state and prove our main results, 
we first introduce the basic notation.

The symbols $C$ and $C_j$ with $j\in\N$
always denote generic positive constants. 
We occasionally emphasize that $C$ depends on a specific set of quantities 
$\set{a,b\dots}$
by writing $C=C(a,b,\dots)$.

When we fix a time period $\per>0$,
the associated torus group is denoted by $\torus\coloneqq\R/\per\Z$.
Then every element of $\torus$ can be identified with 
a unique representative in $[0,\per)$, which we tacitly do from time to time.
Moreover, $\torus$ is always equipped with
the normalized Haar measure such that
\[
\forall f\in\CR{}(\torus):\quad
\int_\torus f(t)\,\dt
\coloneqq
\frac{1}{\per}
\int_0^\per f(t')\,\dt',
\]
where $\CR{}(\torus)$ is the class of continuous functions on $\torus$.
A point $(t,x)\in\torus\times\R^3$
is composed of a time variable $t\in\torus$ 
and a space variable $x=(x_1,x_2,x_3)\in\R^3$.
We denote the Euclidean norm of $x$ by $\snorm{x}$,
and
$x\cdot y$, $x\wedge y$ and $x\otimes y$
represent the scalar, vector and tensor products of $x,y\in\R^3$.
We further use the shorthand 
$x\wedge y\cdot z\coloneqq\np{x\wedge y}\cdot z$ for $x,y,z\in\R^3$.

Time and spatial derivatives are denoted by
$\pt$ and $\partial_j\coloneqq\partial_{x_j}$, $j=1,2,3$, respectively,
and the symbols for (spatial) gradient, divergence and Laplace operator are
$\grad$, $\Div$ and $\Delta$. 
The symbol $\grad^2\uvel$ denotes the collection of all 
second-order spatial derivatives of a sufficiently regular function $\uvel$.

In the whole article we either have $\Omega=\R^3$ or
we let $\Omega\subset\R^3$ be an exterior domain,
that is, $\Omega$ is a domain and its complement is a compact nonempty set in $\R^3$.
Moreover, $\ball_R\subset\R^3$ denotes the ball
of radius $R>0$ centered at $0$,
and $\Omega_R\coloneqq\Omega\cap\ball_R$.

For classical Lebesgue and Sobolev spaces we write $\LR{q}(\Omega)$
and $\WSR{k}{q}(\Omega)$, where $q\in[1,\infty]$ and $k\in\N$,
and $\norm{\cdot}_{q;\Omega}$ 
and $\norm{\cdot}_{k,q;\Omega}$ denote the associated norms. 
When the domain is clear from the context, 
we simply write $\norm{\cdot}_{q}$ 
and $\norm{\cdot}_{k,q}$ instead.
This convention is adapted for
the norm $\norm{\cdot}_{q,\torus\times\Omega}$
of the Lebesgue space $\LR{q}(\torus\times\Omega)$ in space and time.
We further let 
$\CRci(\Omega)$ be the class of all smooth functions with compact support in $\Omega$,
and $\WSRN{1}{q}(\Omega)$ denotes its closure in $\WSR{1}{q}(\Omega)$.
For the dual space of $\WSRN{1}{q}(\Omega)$ we write $\WSR{-1}{q'}(\Omega)$,
where $1/q+1/q'=1$,
which we equip with the norm $\norm{\cdot}_{-1,q';\Omega}$.
Moreover, $\LRloc{q}(\Omega)$ and $\WSRloc{k}{q}(\Omega)$
denote the classes of all functions that 
locally belong to $\LR{q}(\Omega)$ and $\WSR{k}{q}(\Omega)$,
respectively.

We usually do not distinguish between 
a space $X$ and its vector-valued version $X^n$, $n\in\N$,
when the dimension is clear from the context.
By $\norm{\cdot}_X$ we denote the norm of a general normed space $X$.
We write $\LR{q}(\torus;X)$ 
for the corresponding Bochner--Lebesgue space when $q\in[1,\infty)$,
and we define
$\WSR{1}{q}(\torus;X)\coloneqq\setcl{\uvel\in\LR{q}(\torus;X)}{\pt\uvel\in\LR{q}(\torus;X)}$.

In our subsequent analysis, the configuration for $\Omega=\R^3$ plays an important role.
In this case, the space-time domain is given by
$\grp\coloneqq\torus\times\R^3$,
which is a locally compact abelian group
with dual group isomorphic to $\dualgrp\coloneqq\Z\times\R^3$.
As natural generalizations of the classes of Schwartz functions 
and tempered distributions in the Euclidean setting,
one can define the Schwartz--Bruhat space $\SR(\grp)$
and its dual space $\TDR(\grp)$ on $\grp$,
which were first introduced by \textsc{Bruhat} \cite{Bruhat61},
see also \cite{EiterKyed_tplinNS_PiFbook} for more details and a precise definition
of these spaces.
In this framework, the Fourier transform $\FT_\grp$ and its inverse $\iFT_\grp$, 
defined by
\[
\begin{aligned}
\FT_\grp\colon\SR(\grp)\ra\SR(\dualgrp), 
&&\FT_\grp\nb{\uvel}(k,\xi)
&\coloneqq\int_\torus\int_{\R^3} \uvel(t,x)\e^{-ix\cdot\xi-ik t}\,\dx\dt,\\
\iFT_\grp\colon\SR(\dualgrp)\ra\SR(\grp), 
&&\iFT_\grp\nb{\wvel}(t,x)
&\coloneqq\sum_{k\in\Z}\,\int_{\R^3} \wvel(k,\xi)\e^{ix\cdot\xi+ik t}\,\dxi,
\end{aligned}
\]
are mutually inverse isomorphisms
provided that the Lebesgue measure $\dxi$ is normalized in a suitable way.
By duality, the Fourier transform can be extended to
an isomorphism $\TDR(\grp)\to\TDR(\dualgrp)$.
By analogy, the Fourier transforms 
on the groups $\torus$ and $\R^3$ are given by
\[
\begin{aligned}
\FT_{\torus}\colon\SR(\torus)&\ra\SR(\Z), 
&\qquad
\FT_{\torus}\nb{\uvel}(k)
&\coloneqq\int_\torus \uvel(t)\e^{-ik t}\,\dt,\\
\iFT_{\torus}\colon\SR(\Z)&\ra\SR(\torus), 
&\qquad
\iFT_{\torus}\nb{\wvel}(t)
&\coloneqq\sum_{k\in\Z}\wvel(k)\e^{ik t},
\\
\FT_{\R^3}\colon\SR(\R^3)&\ra\SR(\R^3), 
&\qquad
\FT_{\R^3}\nb{\uvel}(\xi)
&\coloneqq\int_{\R^3} \uvel(x)\e^{-ix\cdot\xi}\,\dx,\\
\iFT_{\R^3}\colon\SR(\R^3)&\ra\SR(\R^3), 
&\qquad
\iFT_{\R^3}\nb{\wvel}(x)
&\coloneqq\int_{\R^3} \wvel(\xi)\e^{ix\cdot\xi}\,\dxi.
\end{aligned}
\]

Our investigation of the time-periodic problem \eqref{sys:Stokes.rot.tp}
will mainly be performed in a framework of
spaces of absolutely convergent Fourier series.
For a normed space $X$,
these are defined by
\begin{align}\label{eq:def.Aspace}
\begin{aligned}
\AR(\torus;X)
&\coloneqq
\setcL{f\colon\torus\to X}{f(t)=\sum_{k\in\Z}f_k \e^{ikt}, \ f_k\in X, \ 
\sum_{k\in\Z}\norm{f_k}_{X}<\infty},
\\
\norm{f}_{\AR(\torus;X)}
&\coloneqq\sum_{k\in\Z}\norm{f_k}_{X}.
\end{aligned}
\end{align}
If $X$ is a Banach space,
then $\AR(\torus;X)$ coincides with the Banach space
$\iFT_\torus\bb{\lR{1}(\Z;X)}$, 
and $\AR(\torus;X)\embeds\CR{}(\torus;X)$.
Observe that many inequalities in spaces $X$ 
have natural extensions to the corresponding spaces $\AR(\torus;X)$,
for example, H\"older's inequality or interpolation inequalities;
see 
\cite[Prop.~3.1 and 3.2]{EiterKyed_ViscousFlowAroundRigidBodyPerformingTPMotion_2021}.
We also use the shorthand
$\uvel\in\AR(\torus;\WSRloc{k}{q}(\Omega))$
when $\uvel\in\AR(\torus;\WSR{k}{q}(K))$
for all compact sets $K\subset\Omega$.

The existence of solutions to 
the time-periodic problem 
\eqref{sys:Stokes.rot.tp} will be established in the following functional framework.
We fix $\tay>0$ and $q\in(1,3/2)$.
Then the space for the velocity field is given by 
\[
\begin{aligned}
\XT(\torus\times\Omega)\coloneqq
\setcl{\uvel\in\AR(\torus;\WSRloc{2}{q}(\Omega)^3)}{
{}&\grad^2\uvel, \,
\pt\uvel+\tay\rotterm{\uvel} \in\AR(\torus;\LR{q}(\Omega)),\\
&
\uvel\in\AR(\torus;\LR{3q/(3-2q)}(\Omega)), \,
\grad\uvel\in\AR(\torus;\LR{3q/(3-q)}(\Omega))},
\end{aligned}
\]
and the function class for the pressure term is given by
\[
\YT(\torus\times\Omega)
\coloneqq
\setcl{\upres\in\AR(\torus;\WSRloc{1}{q}(\Omega))}{
\grad\upres\in\AR(\torus;\LR{q}(\Omega)),\
\upres\in\AR(\torus;\LR{3q/(3-q)}(\Omega))}.
\]
Similarly, we introduce the following function classes for solutions to the resolvent problem
\eqref{sys:Stokes.rot.res}.
For $\tay>0$, $q\in(1,3/2)$ and $s\in\R$,
we define the class of velocity fields by
\[
\begin{aligned}
\Xs(\Omega)\coloneqq
\setcl{\vvel\in\WSRloc{2}{q}(\Omega)^3}{
{}&\grad^2\vvel, \,
is\vvel+\tay\rotterm{\vvel} \in\LR{q}(\Omega),\\
&\qquad\quad
\vvel\in\LR{3q/(3-2q)}(\Omega), \ 
\grad\vvel\in\LR{3q/(3-q)}(\Omega)},
\end{aligned}
\]
and the corresponding pressure belongs to
\[
\Ys(\Omega)
\coloneqq
\setcl{\vpres\in\WSRloc{1}{q}(\Omega)}{
\grad\vpres\in\LR{q}(\Omega),\
\vpres\in\LR{3q/(3-q)}(\Omega)}.
\]
Observe that the function class $\Xs(\Omega)$ for the velocity field
also depends on the resolvent parameter $s\in\R$.
Moreover, if
$\uvel$ belongs to $\XreyT(\torus\times\Omega)$, 
then its $k$-th Fourier coefficient 
$\uvel_k\coloneqq\FT_\torus\nb{\uvel}(k)$
belongs to $\Xs(\Omega)$ with $s=\perf k$.

\section{Main Results}
\label{sec:MainResults}

The main results of this article concern the question of well-posedness
of the time-periodic linear problem \eqref{sys:Stokes.rot.tp}
and the associated resolvent problem \eqref{sys:Stokes.rot.res}.
At first, we address the resolvent problem.

\begin{thm}\label{thm:Stokes.rot.res}
Let $\Omega=\R^3$ or $\Omega\subset\R^3$ 
be an exterior domain with $\CR{3}$-boundary.
Let $s\in\R$ and $0<\tay\leq\tay_0$,
and let $q\in(1,3/2)$ and $g\in\LR{q}(\Omega)^3$.
Then there exists a unique solution 
$\np{\vvel,\vpres}\in\Xs(\Omega)\times\Ys(\Omega)$
to \eqref{sys:Stokes.rot.res}
that obeys the estimates
\begin{equation}\label{est:Stokes.rot.res}
\begin{aligned}
&\norm{\dist(s,\tay\Z) \,\vvel}_q
+\norm{is\vvel+\tay\rotterm{\vvel}}_{q}
+\norm{\grad^2 \vvel}_{q}
+\norm{\grad \vpres}_{q}
\\
&\qquad\qquad\qquad\qquad\qquad
+\norm{\grad \vvel}_{3q/(3-q)}
+\norm{\vvel}_{3q/(3-2q)}
+\norm{\vpres}_{3q/(3-q)}
\leq C\norm{g}_{q}
\end{aligned}
\end{equation}
for a constant 
$C=C(\Omega,q,\tay_0)>0$.
In particular, $C$ can be chosen independently of $s\in\R$ and $\tay\in(0,\tay_0]$.
\end{thm}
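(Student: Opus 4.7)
The plan is to follow the two-step strategy outlined in the introduction: first establish the estimate \eqref{est:Stokes.rot.res} in the whole space $\Omega = \R^3$ through the time-periodic auxiliary problem \eqref{sys:Stokes.tp.aux.intro}, and then transfer the result to an exterior domain by a standard cut-off and Bogovski\u\i{} procedure.

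In the whole-space case I would exploit the rotational symmetry of $\R^3$ to absorb the term $\tay\rotterm{\cdot}$ into a time derivative. With $\rotmatrix(t) \coloneqq \exp(t\tay\OprE_1)$, $\OprE_1 y = \eone\wedge y$, and $\per \coloneqq 2\pi/\tay$, a rotational pullback combined with a factor $e^{\pm ist}$ converts any candidate solution $(\vvel,\vpres)$ of \eqref{sys:Stokes.rot.res} into a solution $(\uvel,\upres)$ of the auxiliary problem \eqref{sys:Stokes.tp.aux.intro} on $\torus\times\R^3$ with an analogously transformed right-hand side, and vice versa by projecting onto the mode $e^{ist}$. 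Since this correspondence preserves the $\LR{q}$-norms appearing in \eqref{est:Stokes.rot.res}, it suffices to derive the corresponding estimate for \eqref{sys:Stokes.tp.aux.intro}.

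On the locally compact abelian group $\grp \coloneqq \torus\times\R^3$ with dual $\dualgrp = \Z\times\R^3$, I would analyse \eqref{sys:Stokes.tp.aux.intro} by Fourier methods. After applying the Helmholtz projection, the solenoidal component of the solution is given by the Fourier multiplier with symbol
\[
\bp{i(s+\tay k) + \snorm{\xi}^2}^{-1}\bp{\idmatrix - \xi\otimes\xi/\snorm{\xi}^2}, \qquad (k,\xi) \in \Z\times\R^3,
\]
which is the classical Stokes resolvent symbol at the purely imaginary parameter $\lambda_k \coloneqq i(s+\tay k)$. For all indices $k$ for which $\snorm{s+\tay k}$ is bounded below the associated $\LR{q}$-multiplier yields the full Stokes resolvent estimate, whereas for the unique resonant index $k_0 \in \Z$ minimizing $\snorm{s+\tay k}$ only the homogeneous second-order bound on $\grad^2\uvel_{k_0}$ and $\grad\upres_{k_0}$ is available; the factor $\dist(s,\tay\Z)$ on the left-hand side of \eqref{est:Stokes.rot.res} arises precisely from this resonant mode, and the lower-order norms $\norm{\uvel}_{3q/(3-2q)}$, $\norm{\grad\uvel}_{3q/(3-q)}$, $\norm{\upres}_{3q/(3-q)}$ are recovered from the classical homogeneous Sobolev embeddings on $\R^3$. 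A transference-type multiplier argument on $\grp$ then produces the required bounds with constants uniform in $s\in\R$ and $\tay\in(0,\tay_0]$. For an exterior domain $\Omega$, I would fix $R>0$ with $\R^3\setminus\Omega \subset \ball_R$ and a cut-off $\cutoff \in \CRci(\R^3)$ supported in $\ball_{3R}$ and equal to one on $\ball_{2R}$, decompose any solution into a cut-off whole-space solution and a cut-off bounded-domain solution corrected by the Bogovski\u\i{} operator $\bogopr$, and combine the whole-space estimate with the bounded-domain Stokes-type estimates (noting that on $\Omega_{3R}$ the coefficient $\eone\wedge x$ is bounded and $\tay\rotterm{\cdot}$ is a compact perturbation of the classical Stokes operator) to conclude \eqref{est:Stokes.rot.res} on $\Omega$. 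Uniqueness follows by a duality argument against the adjoint resolvent problem, which has the same form with $(s,\tay)$ replaced by $(-s,-\tay)$ and is therefore solvable by the same construction.

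The main obstacle throughout is that \eqref{eq:spectrum.usual} precludes any separate $\LR{q}$-bound on $\snorm{s}\vvel$ when $s\in\tay\Z$, which is exactly why \eqref{est:Stokes.rot.res} must contain only the combined quantity $is\vvel+\tay\rotterm{\vvel}$ and why a homogeneous Sobolev framework with its embeddings into $\LR{3q/(3-q)}$ and $\LR{3q/(3-2q)}$ is unavoidable. The crux of the argument is the derivation of the homogeneous-scale multiplier estimate for the resonant mode, with constants uniform both in $s\in\R$ and in $\tay\in(0,\tay_0]$; uniformity in $\tay$ is particularly delicate because $\tay$ enters both through the symbol $\lambda_k = i(s+\tay k)$ and through the length $\per = 2\pi/\tay$ of the underlying torus.
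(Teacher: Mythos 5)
Your whole-space strategy mirrors the paper's: reduce the rotational resolvent problem to the auxiliary problem \eqref{sys:Stokes.tp.aux.intro} on $\grp=\torus\times\R^3$ via a rotational pullback, and derive $\LR{q}(\grp)$ multiplier bounds with the transference principle, Marcinkiewicz-type estimates uniform in $s$, and Sobolev embeddings. One small slip in that part: a factor $e^{\pm i s t}$ is not $\per_\tay$-periodic unless $s\in\tay\Z$, so it cannot be used to absorb the $is$-term into the time derivative for general $s$. The paper instead keeps $is$ in the symbol, reduces to $\snorm{s}\le\tay/2$ by multiplying with the genuinely $\per_\tay$-periodic factor $e^{i\tay\ell t}$, and handles the near-resonance at $\eta=-s/\tay$ by a smooth cut-off inside the continuous extension $M$ of the discrete multiplier $m$ before applying Theorem \ref{thm:TransferencePrinciple}.

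The substantive gap is in the exterior-domain step. Cutting off near $\partial\Omega$ and correcting with the Bogovski\u\i{} operator does not directly produce \eqref{est:Stokes.rot.res}; it produces an estimate of the form \eqref{est:Stokes.rot.res.extdom.pert}, in which the right-hand side also contains $\np{1+\tay}\norm{\vvel}_{1,q;\Omega_R}$, $\norm{\vpres}_{q;\Omega_R}$ and $\snorm{s}\norm{\vvel}_{-1,q;\Omega_R}$ — lower-order norms of the unknown itself coming from the commutator and Bogovski\u\i{} terms. Removing these with a constant that is \emph{uniform in $s\in\R$ and $\tay\in(0,\tay_0]$} is the crux, and the paper does it by a contradiction/compactness argument (Lemma \ref{lem:Stokes.rot.res.estimates}): take normalized sequences $(s_j,\tay_j,\vvel_j,\vpres_j,g_j)$, extract weak limits, and then distinguish the cases $\snorm{s}<\infty$ with $\tay=0$, $\snorm{s}<\infty$ with $\tay>0$, and $\snorm{s}=\infty$, each requiring a different uniqueness input (Stokes resolvent uniqueness, the rotational uniqueness Lemma \ref{lem:Oseen.rot.res.uniqueness}, and uniqueness of the Helmholtz decomposition, respectively). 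Your invocation of ``compact perturbation'' addresses Fredholmness at fixed $(s,\tay)$ but says nothing about this uniformity, and uniformity is exactly what Theorem \ref{thm:Stokes.rot.tp} will need when $s$ ranges over $\perf\Z$. You would also need an existence mechanism for the exterior domain (the paper uses a Galerkin/invading-domains construction in Lemma \ref{lem:Oseen.rot.res.exist.smooth}, since a Neumann-series or direct Fredholm argument is obstructed by $is$ lying in the essential spectrum); your uniqueness-by-duality is workable in principle but again presupposes solvability of an adjoint problem of the same type, so it cannot replace that step.
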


Note that 
if $s\not\in\tay\Z$,
then estimate \eqref{est:Stokes.rot.res} implies 
$\vvel\in\LR{q}(\Omega)$,
which yields $\vvel\in D(A_\tay)$,
where $D(A_\tay)$ is defined in \eqref{eq:def.DAtay}.
But a similar inclusion cannot be obtained if $s\in\tay\Z$.
This observation is in complete accordance with \eqref{eq:spectrum.usual}.

Working within a framework of absolutely convergent Fourier series,
we can then employ Theorem \ref{thm:Stokes.rot.res}
on the level of the Fourier coefficients
to derive well-posedness of the time-periodic problem \eqref{sys:Stokes.rot.tp}.
As will become clear from the proof, 
to conclude existence of $\per$-periodic solutions,
it is important that the constant $C$ in \eqref{est:Stokes.rot.res}
can be chosen uniformly for $s\in\perf\Z$.

\begin{thm}\label{thm:Stokes.rot.tp}
Let $\Omega=\R^3$ or $\Omega\subset\R^3$ 
be an exterior domain with $\CR{3}$-boundary.
Let $\per>0$ and $0<\tay\leq\tay_0$,
and let $q\in(1,3/2)$ and $f\in\LR{q}(\torus\times\Omega)^3$.
Then there exists a unique $\per$-periodic solution 
$\np{\uvel,\upres}\in\XT(\torus\times\Omega)\times\YT(\torus\times\Omega)$
to \eqref{sys:Stokes.rot.tp}
that obeys the estimates
\begin{equation}\label{est:Stokes.rot.tp}
\begin{aligned}
&\norm{\pt\uvel+\tay\rotterm{\uvel}}_{\AR(\torus;\LR{q}(\Omega))}
+\norm{\grad^2 \uvel}_{\AR(\torus;\LR{q}(\Omega))}
+\norm{\grad \upres}_{\AR(\torus;\LR{q}(\Omega))}
\\
&\qquad\qquad\quad
+\norm{\grad \uvel}_{\AR(\torus;\LR{3q/(3-q)}(\Omega))}
+\norm{\uvel}_{\AR(\torus;\LR{3q/(3-2q)}(\Omega))}
+\norm{\upres}_{\AR(\torus;\LR{3q/(3-q)}(\Omega))}
\\
&\qquad\qquad\qquad\qquad\qquad\qquad\qquad\qquad\qquad\qquad\qquad\qquad\qquad
\leq C\norm{f}_{\AR(\torus;\LR{q}(\Omega))}
\end{aligned}
\end{equation}
for a constant $C=C(\Omega,q,\tay_0)>0$.
In particular, $C$ can be chosen independently of $\tay\in(0,\tay_0]$ and $\per>0$.
\end{thm}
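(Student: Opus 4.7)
The plan is to obtain the $\per$-periodic solution by solving the resolvent problem \eqref{sys:Stokes.rot.res} independently on each Fourier mode and assembling an absolutely convergent Fourier series, exploiting the uniformity of the constant in Theorem \ref{thm:Stokes.rot.res}. Concretely, since $f\in\AR(\torus;\LR{q}(\Omega))$, write
\[
f(t,x)=\sum_{k\in\Z}f_k(x)\,\e^{ikt},\qquad \sum_{k\in\Z}\norm{f_k}_{\LR{q}(\Omega)}=\norm{f}_{\AR(\torus;\LR{q}(\Omega))}.
\]
For each $k\in\Z$, set $s_k\coloneqq\perf k$ and invoke Theorem \ref{thm:Stokes.rot.res} with resolvent parameter $s=s_k$ and right-hand side $g=f_k$ to produce a unique $(\vvel_k,\vpres_k)\in\mathrm X_{\tay,s_k}^q(\Omega)\times\mathrm Y^q(\Omega)$ satisfying \eqref{sys:Stokes.rot.res} and the estimate \eqref{est:Stokes.rot.res} with constant $C=C(\Omega,q,\tay_0)$ that is \emph{independent} of $k$.

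Next, I would define the candidate solution by
\[
\uvel(t,x)\coloneqq\sum_{k\in\Z}\vvel_k(x)\,\e^{ikt},\qquad \upres(t,x)\coloneqq\sum_{k\in\Z}\vpres_k(x)\,\e^{ikt},
\]
and verify convergence in all relevant $\AR(\torus;X)$-spaces. Summing the six norms on the left of \eqref{est:Stokes.rot.res} over $k$ gives, by the uniform bound,
\[
\sum_{k\in\Z}\Bp{\norm{is_k\vvel_k+\tay\rotterm{\vvel_k}}_{q}+\norm{\grad^2\vvel_k}_{q}+\norm{\grad\vpres_k}_{q}+\cdots}\leq C\sum_{k\in\Z}\norm{f_k}_{q}=C\norm{f}_{\AR(\torus;\LR{q}(\Omega))}.
\]
Since the $k$-th Fourier coefficient of $\pt\uvel+\tay\rotterm{\uvel}$ is $is_k\vvel_k+\tay\rotterm{\vvel_k}$, this absolute summability puts $(\uvel,\upres)$ into $\XT(\torus\times\Omega)\times\YT(\torus\times\Omega)$ and immediately yields \eqref{est:Stokes.rot.tp}. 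Applying the resolvent equations \eqref{sys:Stokes.rot.res} term by term, and noting that uniform absolute convergence in the relevant spaces legitimates interchanging the sum with $\pt$, $\grad$, $\Div$, $\Delta$ and the boundary trace, shows that $(\uvel,\upres)$ indeed solves \eqref{sys:Stokes.rot.tp}.

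For uniqueness, if $(\uvel,\upres)\in\XT\times\YT$ solves \eqref{sys:Stokes.rot.tp} with $f=0$, then taking the $k$-th Fourier coefficient produces a pair $(\uvel_k,\upres_k)\in\mathrm X_{\tay,s_k}^q(\Omega)\times\mathrm Y^q(\Omega)$ solving \eqref{sys:Stokes.rot.res} with $g=0$ and $s=s_k$; the uniqueness part of Theorem \ref{thm:Stokes.rot.res} forces $\uvel_k=0$ and $\upres_k=0$ for every $k$, hence $\uvel=0$ and $\upres=0$.

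The only nontrivial step is the assembly, and its success hinges entirely on the $s$-uniformity of $C$ in Theorem \ref{thm:Stokes.rot.res}; any $k$-dependence there that is not at least compensated by the decay of $\norm{f_k}_q$ would break the argument. There are no new PDE difficulties, just bookkeeping: one must check that the closure of $\mathrm X_{\tay,s_k}^q$-solutions under absolute summation in the norms of \eqref{est:Stokes.rot.tp} lands back in $\XT(\torus\times\Omega)$, and that the boundary condition $\uvel=0$ on $\torus\times\partial\Omega$ passes to the limit (which follows from $\AR(\torus;\WSR{2}{q}(\Omega_R))\embeds\CR{}(\torus;\WSR{2}{q}(\Omega_R))$ and the continuity of the trace operator). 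Modulo these verifications, the theorem reduces to a direct application of Theorem \ref{thm:Stokes.rot.res} mode by mode.
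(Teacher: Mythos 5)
Your proposal is correct and follows essentially the same route as the paper: decompose $f$ into its Fourier modes, solve the resolvent problem \eqref{sys:Stokes.rot.res} with $s=\perf k$ for each $k$ using the $s$- and $\tay$-uniform constant from Theorem \ref{thm:Stokes.rot.res}, sum the resulting estimates to get absolute convergence of the Fourier series in the $\AR(\torus;\cdot)$ norms, and argue uniqueness by passing to Fourier coefficients of a zero-data solution. The extra care you note about interchanging series with differential operators and passing the boundary trace to the limit is the same bookkeeping the paper treats implicitly.
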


\begin{rem}\label{rem:est.dist}
In contrast to the other terms in estimate \eqref{est:Stokes.rot.res},
the term $\norm{\dist(s,\tay\Z) \,\vvel}_q$ 
does not directly correspond to any of the terms in \eqref{est:Stokes.rot.tp}.
However, going through the proof of Theorem \ref{thm:Stokes.rot.tp},
one may derive an additional estimate.
Decompose the 
set of Fourier indices into 
$A_1\coloneqq\setc{k\in\Z}{\perf k\in\tay\Z}$
and $A_2\coloneqq\setc{k\in\Z}{\perf k\not\in\tay\Z}$
and split the velocity field $\uvel$ accordingly
as 
\[
\uvel=\uvel^{(1)}+\uvel^{(2)}, 
\qquad
\uvel^{(1)}\coloneqq\sum_{k\in A_1}\uvel_k \e^{i\perf kt},
\qquad
\uvel^{(2)}\coloneqq\sum_{k\in A_2}\uvel_k \e^{i\perf kt}.
\]
Then the estimate
\[
\norml{ d_{\tay,\per} \,\uvel^{(2)}}_{\AR(\torus;\LR{q}(\Omega))}
\leq C\norm{f}_{\AR(\torus;\LR{q}(\Omega))}
\]
follows, where
\[
d_{\tay,\per} 
\coloneqq\inf\setcL{\dist\bp{\perf k,\tay\Z}}{k\in\Z,\,\perf k\not\in\tay\Z}
=\inf\setcL{\snorm{a}}{0\neq a\in\perf\Z+\tay\Z}.
\]
Of course, this estimate only provides new information when $d_{\tay,\per}>0$.
A classical argument shows that this is the case if and only if 
$\perf/\tay\in\Q$.
\end{rem}

\section{The resolvent problem in the whole space}
\label{sec:resprob.wholespace}

We begin with the study
of the resolvent problem \eqref{sys:Stokes.rot.res}
in the case $\Omega=\R^3$,
where it simplifies to
\begin{equation}\label{sys:Stokes.rot.res.R3}
\begin{pdeq}
is\vvel
+\tay\rotterm{\vvel}
- \Delta \vvel
+ \grad \vpres 
&= g
&&\tin\R^3,
\\
\Div\vvel
&=0 
&& \tin\R^3.
\end{pdeq}
\end{equation}
In this section we show the following result on well-posedness of \eqref{sys:Stokes.rot.res.R3}.

\begin{thm}\label{thm:Stokes.rot.res.R3}
Let $\tay>0$ and $s\in\R$. 
For each $g\in\LR{q}(\R^3)^3$ there exists 
a solution 
$\np{\vvel,\vpres}\in\WSRloc{2}{q}(\R^3)^3
\times\WSRloc{1}{q}(\R^3)$ 
to \eqref{sys:Stokes.rot.res.R3}
that satisfies
\begin{equation}\label{est:Stokes.rot.res.R3}
\norm{\dist(s,\tay\Z)\,\vvel}_{q}
+\norm{is\vvel+\tay\rotterm{\vvel}}_{q}
+\norm{\grad^2 \vvel}_{q}
+\norm{\grad \vpres}_{q}
\leq C\norm{g}_{q}
\end{equation}
as well as
\begin{align}
\label{est:Stokes.rot.res.R3.grad}
\norm{\grad\vvel}_{3q/(3-q)}
+\norm{\vpres}_{3q/(3-q)}
&\leq C\norm{g}_{q}
\qquad\tif q<3,
\\
\label{est:Stokes.rot.res.R3.fct}
\norm{\vvel}_{3q/(3-2q)}
&\leq C\norm{g}_{q}
\qquad\tif q<3/2,
\end{align}
for a constant $C=C(q)>0$.
Moreover, if $\np{\wvel,\wpres}\in\LRloc{1}(\torus\times\R^3)^{3+1}$
is another distributional solution
to \eqref{sys:Stokes.rot.res.R3}, then the following holds:
\begin{enumerate}[label=\roman*.]
\item
If $\grad^2\wvel,\, is\wvel+\tay\rotterm{\wvel}\in\LR{q}(\R^3)$,
then 
\[
\begin{aligned}
is\wvel+\tay\rotterm{\wvel}&=is\vvel+\tay\rotterm{\vvel},
\\
\grad^2\wvel=\grad^2\vvel,
\qquad
\grad\wpres&=\grad\vpres.
\end{aligned}
\]
\item
If $q<3/2$ or $s\not\in\tay\Z$, and if  
$\wvel\in\LR{r}(\R^3)^3$ for some $r\in(1,\infty)$,
then $\vvel=\wvel$ and $\vpres=\wpres+c$ for a constant $c\in\R$.
\end{enumerate}
\end{thm}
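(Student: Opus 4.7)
The plan is to reduce \eqref{sys:Stokes.rot.res.R3} to the auxiliary time-periodic problem \eqref{sys:Stokes.tp.aux.intro} on $\torus\times\R^3$ via a rotating coordinate change, to solve the latter by Fourier analysis on the locally compact abelian group $\torus\times\R^3$, and then to transfer the resulting estimates back, following the strategy of \cite{EiterKyed_ViscousFlowAroundRigidBodyPerformingTPMotion_2021}. Set $\per\coloneqq 2\pi/\tay$, $\torus\coloneqq\R/\per\Z$, and let $Q(t)\in SO(3)$ denote rotation about $\eone$ by angle $t$, so that $t\mapsto Q(\tay t)$ is $\per$-periodic. Given a candidate $(\vvel,\vpres)$, define
\begin{equation*}
\uvel(t,y)\coloneqq Q(\tay t)\vvel\bigl(Q(\tay t)^{\transpose}y\bigr),\;\;\upres(t,y)\coloneqq\vpres\bigl(Q(\tay t)^{\transpose}y\bigr),\;\;F(t,y)\coloneqq Q(\tay t)g\bigl(Q(\tay t)^{\transpose}y\bigr).
\end{equation*}
Exploiting $Q(\tay t)\eone=\eone$ and the commutativity of $Q(\tay t)$ with $\eone\wedge\cdot$ and $\Delta$, a direct calculation yields the identity $\partial_t\uvel=\tay\rotterm{\uvel}$ and converts \eqref{sys:Stokes.rot.res.R3} into
\begin{equation*}
is\uvel+\partial_t\uvel-\Delta\uvel+\grad\upres=F,\quad\Div\uvel=0\qquad\text{in }\torus\times\R^3.
\end{equation*}
Since $Q(\tay t)$ is an isometry of $\R^3$, this transformation preserves all relevant $\LR{q}$ norms, in particular $\norm{F}_{\LR{q}(\torus\times\R^3)}=\norm{g}_{q}$ and $\norm{is\uvel+\partial_t\uvel}_{\LR{q}(\torus\times\R^3)}=\norm{is\vvel+\tay\rotterm{\vvel}}_{q}$.

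Applying $\FT_{\grp}$ with $\grp=\torus\times\R^3$ (so that $\dualgrp=\tay\Z\times\R^3$) and the Leray projection $P(\xi)=\idmatrix-\xi\otimes\xi/\snorm{\xi}^2$, I obtain the explicit representation
\begin{equation*}
\hat\uvel(k,\xi)=\frac{P(\xi)}{is+ik+\snorm{\xi}^2}\hat F(k,\xi),\qquad\hat\upres(k,\xi)=\frac{-i\xi\cdot\hat F(k,\xi)}{\snorm{\xi}^2}.
\end{equation*}
The denominator vanishes only at $(k,\xi)=(-s,0)$, which is possible precisely when $s\in\tay\Z$. The symbols $\snorm{\xi}^2P(\xi)/(is+ik+\snorm{\xi}^2)$, $(is+ik)P(\xi)/(is+ik+\snorm{\xi}^2)$, and $\xi\otimes\xi/\snorm{\xi}^2$ have modulus bounded by $1$ and satisfy Mikhlin-type derivative bounds uniformly in $k\in\tay\Z$ and $s\in\R$, since $\snorml{is+ik+\snorm{\xi}^2}^2=(s+k)^2+\snorm{\xi}^4$. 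A Mikhlin--H\"ormander theorem in this group setting (as developed in \cite{EiterKyed_tplinNS_PiFbook}, or via de Leeuw transference from $\R\times\R^3$) then gives the corresponding $\LR{q}(\torus\times\R^3)$ bounds, and thus---after pulling back---the desired $\LR{q}(\R^3)$ estimates on $\grad^2\vvel$, $is\vvel+\tay\rotterm{\vvel}$, and $\grad\vpres$. For the weighted term $\dist(s,\tay\Z)\vvel$, the pointwise inequality
\begin{equation*}
\frac{\dist(s,\tay\Z)}{\snorml{is+ik+\snorm{\xi}^2}}\leq\frac{\snorm{s+k}}{\sqrt{(s+k)^2+\snorm{\xi}^4}}\leq 1\qquad(k\in\tay\Z),
\end{equation*}
together with analogous Mikhlin-type derivative bounds, yields the estimate with constant uniform in $s$. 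Finally, \eqref{est:Stokes.rot.res.R3.grad}--\eqref{est:Stokes.rot.res.R3.fct} follow from the homogeneous Sobolev embeddings $\DSR{1}{q}(\R^3)\embeds\LR{3q/(3-q)}(\R^3)$ (for $q<3$) and $\DSR{2}{q}(\R^3)\embeds\LR{3q/(3-2q)}(\R^3)$ (for $q<3/2$).

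For the uniqueness claims, set $(\psi,\pi)\coloneqq(\vvel-\wvel,\vpres-\wpres)$ and apply the same transformation to produce a distributional solution $\uvel_\psi$ of the homogeneous auxiliary problem. A short Fourier argument (take the divergence in the equation and eliminate the pressure) shows that $\hat\uvel_\psi$ is supported on $\{(k,\xi):\xi=0,\ k=-s\}$, which is empty unless $s\in\tay\Z$. In case (i), the regularity hypotheses translate into $\xi_j\xi_k\hat\uvel_\psi=0$ for all $j,k$ and $(s+k)\hat\uvel_\psi=0$ in the sense of tempered distributions on $\tay\Z\times\R^3$, which directly yield the claimed identities between $\grad^2\wvel$, $\grad\wpres$, $is\wvel+\tay\rotterm{\wvel}$ and their counterparts for $\vvel$. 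In case (ii), the extra integrability $\wvel\in\LR{r}(\R^3)$ gives $\uvel_\psi\in\LR{r}(\torus\times\R^3)$; since a tempered distribution supported at the single point $(-s,0)$ is a finite linear combination of derivatives of a Dirac mass, its inverse Fourier transform is a polynomial in $y$ times $\e^{-ist}$, and no such nonzero function belongs to $\LR{r}$ when $r<\infty$. Therefore $\uvel_\psi=0$, whence $\vvel=\wvel$ and $\vpres=\wpres+c$; when $q<3/2$, \eqref{est:Stokes.rot.res.R3.fct} places $\vvel\in\LR{3q/(3-2q)}(\R^3)$ and the same polynomial argument applies.

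The main technical obstacle I anticipate is the Mikhlin--H\"ormander multiplier analysis on the mixed discrete-continuous group $\tay\Z\times\R^3$ with constants uniform in both the resolvent parameter $s\in\R$ and the angular velocity $\tay\in(0,\tay_0]$; special care is required near the resonance $(k,\xi)=(-s,0)$, which simultaneously controls the $\dist(s,\tay\Z)$ weight and the nontrivial kernel underlying uniqueness statement (i).
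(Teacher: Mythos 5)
Your strategy --- the rotating coordinate change combined with Fourier multiplier analysis on $\torus\times\R^3$ --- is precisely the paper's, though the paper proceeds in the reverse order: first it solves the auxiliary problem \eqref{sys:Stokes.tp.aux.intro} (Theorem \ref{thm:Stokes.tp.mod.R3}), then introduces the rotation term by the coordinate change (Theorem \ref{thm:Stokes.tp.rot.mod.R3}), and finally obtains Theorem \ref{thm:Stokes.rot.res.R3} by feeding in the time-independent datum $f(t,x)=g(x)$ and averaging, $\vvel(x)=\int_\torus\uvel(t,x)\,\dt$. Two genuine gaps remain in your version, plus one flawed claim in the uniqueness argument.

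First, you assert uniform Mikhlin bounds but do not justify them near the resonance $(k,\xi)=(-s,0)$. When $s\in\tay\Z$ this is a point of the dual group $\tay\Z\times\R^3$; the individual symbols $is/D_s$ and $ik/D_s$ are singular there, and even the combined symbol $(is+ik)/D_s$, extended to $\R\times\R^3$ for de Leeuw transference, fails to be continuous at $(-s,0)$ (it tends to $1$ along $\xi=0$ and to $0$ along $\eta=-s$). The transference theorem requires a \emph{continuous} extension. The paper resolves this by modulating $\uvel=\wvel\,\e^{-i\tay\ell t}$ to reduce to $0<\snorm{s}\leq\tay/2$ (so the denominator never vanishes on the dual group $\Z\times\R^3$), treating $s=0$ separately as the classical time-periodic Stokes problem of \cite{Kyed_mrtpns}, and inserting a cut-off $\cutoff$ in the extension to $\R\times\R^3$ so as to stay clear of the resonance. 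Second, you construct $\uvel$ from the time-dependent datum $F(t,y)=Q(\tay t)g(Q(\tay t)^\transpose y)$ via the multiplier formula, but you never show that the pulled-back $\vvel(x)\coloneqq Q(\tay t)^\transpose\uvel(t,Q(\tay t)x)$ is actually independent of $t$; this requires an additional rotational-equivariance-and-uniqueness argument or, as the paper does, averaging a solution of the time-periodic rotating problem with time-independent right-hand side. Finally, your claim in the uniqueness argument that $\ft{\uvel}_\psi$ is supported only at $\set{(-s,0)}$ is incorrect: eliminating the pressure gives $\supp\ft{\uvel}_\psi\subset\tay\Z\times\set{0}$ (all frequencies $k$, not just $k=-s$), so $\uvel_\psi(t,\cdot)$ is a polynomial in $x$ for a.e.~$t$. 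The polynomial-versus-integrability argument you invoke afterward is sound and yields the right conclusion, but the support statement as written is false.
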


In order to prove Theorem \ref{thm:Stokes.rot.res.R3},
we first consider the 
auxiliary problem
\begin{equation}\label{sys:Stokes.tp.mod.R3}
\begin{pdeq}
is\uvel
+\pt\uvel
- \Delta \uvel
+ \grad \upres 
&= f
&&\tin\torus\times\R^3,
\\
\Div\uvel
&=0 
&& \tin\torus\times\R^3,
\end{pdeq}
\end{equation}
which can be regarded as a mixture of 
the Stokes resolvent problem and the time-periodic Stokes problem.
In contrast to the original time-periodic problem \eqref{sys:Stokes.rot.tp},
the differential operator associated with \eqref{sys:Stokes.tp.mod.R3}
has constant coefficients,
which enables us to
express its solution via Fourier multipliers.
Since \eqref{sys:Stokes.tp.mod.R3}
is a problem in the locally compact abelian group $\grp\coloneqq\torus\times\R^3$,
we work with multiplier arguments in this group framework.
This is more involved compared to the usual Euclidean setting,
and for a detailed introduction to this theory, 
with a focus on the analysis of the Navier--Stokes equations,
we refer to the book chapter \cite{EiterKyed_tplinNS_PiFbook}
as well as to the monographs \cite{Kyed_habil,Eiter_Diss}.
The main tool for the derivation of $\LR{q}$ multiplier estimates 
is the so-called transference principle for multipliers,
which goes back to de Leuuw \cite{Leeuw1965}
and was generalized by 
Edwards and Gaudry \cite[Theorem B.2.1]{EdwardsGaudryBook}.
In our investigation we employ the following special case.

\begin{thm}
\label{thm:TransferencePrinciple}
Let $\grp\coloneqq\torus\times\R^3$ and $\grph\coloneqq\R\times\R^3$.
For each $q\in(1,\infty)$ there exists a constant $C_q>0$
with the following property:
If a continuous, bounded function $M\colon\grph\to\C$ is 
an $\LR{q}\np{\grph}$ multiplier,
that is,
\[
\forall h\in\SR(\grph) : \quad
\norml{\iFT_{\grph}\bb{M\,\FT_{\grph}\nb{h}}}_{\LR{q}(\grph)}
\leq C_M \norm{h}_{\LR{q}(\grph)}
\]
for some $C_M>0$,
then the restriction $m\coloneqq\restriction{M}{\Z\times\R^3}$
is an $\LR{q}\np{\grp}$ multiplier with
\[
\forall g\in\SR(\grp) : \quad
\norml{\iFT_{\grp}\bb{m\,\FT_{\grp}\nb{g}}}_{\LR{q}(\grp)}
\leq C_q C_M \norm{g}_{\LR{q}(\grp)}.
\]
\end{thm}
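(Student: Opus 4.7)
My plan is to derive this as a special case of the classical transference principle of de Leeuw and Edwards--Gaudry, using the explicit cutoff-and-periodize device adapted to the present pair $(\grp,\grph)$. The statement is essentially standard once one notes that $\dualgrp=\Z\times\R^3$ sits as a closed subgroup of $\dualgrph=\R\times\R^3$, with the quotient being (dual to) the torus factor of $\grp$, so we are in the archetypical de Leeuw setup.

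First I would reduce, by the density of $\SR(\grp)$ in $\LR{q}(\grp)$, to establishing the estimate for a fixed $g\in\SR(\grp)$, which I identify with its $\per$-periodic extension in the first variable to $\R\times\R^3$. Next, I would fix a real-valued cutoff $\phi\in\CRci(\R)$ with $\norm{\phi}_{\LR{q}(\R)}=1$ and, for each $R>0$, set
\[
g_R(s,x)\coloneqq\phi(s/R)\,g(s,x),\qquad (s,x)\in\grph,
\]
so that $g_R\in\SR(\grph)$. Applying the hypothesis on $M$ to $g_R$ yields
\[
\norml{\iFT_{\grph}\bb{M\,\FT_{\grph}[g_R]}}_{\LR{q}(\grph)}\leq C_M\,\norm{g_R}_{\LR{q}(\grph)}.
\]
The remainder of the argument consists in analyzing both sides asymptotically as $R\to\infty$.

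For the right-hand side, setting $u(s)\coloneqq\norm{g(s,\cdot)}_{\LR{q}(\R^3)}^q$ (a $\per$-periodic function) and substituting $\tau=s/R$, the standard mean-value result for periodic functions gives
\[
R^{-1}\norm{g_R}_{\LR{q}(\grph)}^q
=\int_\R \snorm{\phi(\tau)}^q\,u(R\tau)\,d\tau
\longrightarrow
\norm{\phi}_{\LR{q}(\R)}^q\,\norm{g}_{\LR{q}(\grp)}^q.
\]
For the left-hand side, the key claim I would establish is an asymptotic commutation relation of the form
\[
\iFT_\grph\bb{M\FT_\grph[g_R]}(s,x)=\phi(s/R)\,\iFT_\grp\bb{m\FT_\grp[g]}(s,x)+\varepsilon_R(s,x),
\]
with $R^{-1/q}\norm{\varepsilon_R}_{\LR{q}(\grph)}\to 0$; the same Riemann--Lebesgue-type averaging then yields
\[
R^{-1}\norml{\iFT_\grph\bb{M\FT_\grph[g_R]}}_{\LR{q}(\grph)}^q
\longrightarrow
\norm{\phi}_{\LR{q}(\R)}^q\,\norml{\iFT_\grp\bb{m\FT_\grp[g]}}_{\LR{q}(\grp)}^q.
\]
Combining the two limits with the multiplier inequality and letting $R\to\infty$ then gives the claim, with $C_q=1$ in the clean case; the factor $C_q$ in the statement leaves room for other normalizations.

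The hard part is justifying the commutation identity above. Using periodicity of $g$ in the first variable, I expect
\[
\FT_\grph[g_R](\eta,\xi)=R\sum_{k\in\Z}\ft\phi\bp{R(\eta-k)}\,\FT_\grp[g](k,\xi),
\]
so the Fourier transform of $g_R$ concentrates on the lattice $\Z\times\R^3$ as $R\to\infty$. To exploit this, I would use continuity of $M$ in $\eta$ to replace $M(\eta,\xi)$ by $m(k,\xi)=M(k,\xi)$ on each bump centered at $(k,\xi)$; the rapid decay of $\ft\phi$ (a Schwartz function) together with dominated convergence should make this replacement quantitative in $\LR{q}(\grph)$ after the normalization by $R^{-1/q}$. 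This is precisely where the continuity hypothesis on $M$ is used and is the one place where the proof genuinely relies on harmonic analysis on the Euclidean group $\grph$ rather than on $\grp$ itself.
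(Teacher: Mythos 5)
The paper does not actually prove Theorem \ref{thm:TransferencePrinciple}: it is stated as a special case of the de Leeuw/Edwards--Gaudry transference principle and a citation is given to \cite[Theorem B.2.1]{EdwardsGaudryBook}. Your proposal therefore supplies a proof where the paper provides none, and the strategy you choose — identify $g\in\SR(\grp)$ with its periodic extension, multiply by a dilated cutoff $\phi(\cdot/R)$, apply the $\grph$-multiplier hypothesis, and extract the $\grp$-inequality by averaging as $R\to\infty$ — is indeed the classical de Leeuw argument, correctly adapted to the tensor pair $\grp=\torus\times\R^3$, $\grph=\R\times\R^3$.

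There is, however, a genuine gap at the step you yourself flag as hard. Your asymptotic commutation claim
\[
\iFT_\grph\bb{M\,\FT_\grph\nb{g_R}}
=\phi(s/R)\,\iFT_\grp\bb{m\,\FT_\grp\nb{g}}+\varepsilon_R,
\qquad
R^{-1/q}\norm{\varepsilon_R}_{\LR{q}(\grph)}\to 0,
\]
is what carries the whole proof, and the justification you offer (continuity of $M$, rapid decay of $\ft\phi$, dominated convergence) does not close it. On the Fourier side the error is
$R\sum_k\bp{M(\eta,\xi)-M(k,\xi)}\ft\phi\bp{R(\eta-k)}\FT_\grp\nb{g}(k,\xi)$, and dominated convergence gives pointwise smallness of this symbol near the lattice; but $\iFT_\grph$ is not bounded from $\LR\infty$ (nor from any pointwise notion) to $\LR{q}(\grph)$, so this does not yield smallness of $\norm{\varepsilon_R}_{\LR{q}(\grph)}$. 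Concretely, after substituting $\eta=k+u/R$ one is led to controlling expressions of the form
\[
\normL{\int_\R\ft\phi(u)\,\e^{i\tau u}\bp{T_{k+u/R}c_k-T_k c_k}\,\darg{u}}_{\LR{q}(\R_\tau\times\R^3)},
\qquad T_\eta c\coloneqq\iFT_{\R^3}\bb{M(\eta,\cdot)\FT_{\R^3}\nb{c}},
\]
and neither Minkowski's integral inequality (the inner $\LR{q}(\R_\tau)$ norm of $\e^{i\tau u}$ diverges) nor Hausdorff--Young (which gives $\LR{q'}$, not $\LR{q}$, for $q\le 2$, and fails in the needed direction for $q>2$) delivers the required $o(1)$ bound from mere pointwise continuity of $M$. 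One would also need that $\eta\mapsto T_\eta c$ is strongly continuous into $\LR{q}(\R^3)$ at the lattice points, which is not immediate from boundedness and continuity of the symbol $M$ alone.

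The standard way to close exactly this gap — and the route de Leeuw's argument actually takes — is to pass to a duality formulation: test $\iFT_\grph\bb{M\,\FT_\grph\nb{g_R}}$ against $h_R(s,x)\coloneqq\phi(s/R)h(s,x)$ for $h\in\SR(\grp)$ with $\norm{h}_{\LR{q'}(\grp)}\le 1$. The pairing then becomes a scalar double sum over lattice indices $k,l$ with integrand proportional to
$M(k+v/R,\xi)\,\ft\phi(v)\,\overline{\ft\phi(v+R(k-l))}\,\FT_\grp\nb{g}(k,\xi)\,\overline{\FT_\grp\nb{h}(l,\xi)}$.
Here dominated convergence really does apply: off-diagonal terms ($k\neq l$) vanish because $\ft\phi(v+R(k-l))\to0$ as $R\to\infty$, and the diagonal terms converge to the desired $\grp$-pairing by continuity of $M$ at the lattice points. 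Taking the supremum over $h$ then gives the $\LR{q}(\grp)$ estimate, with constant $\|\phi\|_{\LR{q}}\|\phi\|_{\LR{q'}}/\|\ft\phi\|_{\LR{2}}^2$, which can be driven to $1$ by letting $\phi$ approximate an indicator. If you replace your direct $\LR{q}$ estimate of $\varepsilon_R$ by this bilinear limiting argument, the proof goes through; as written, the key step is asserted rather than proved.
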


This result enables us to reduce Fourier multipliers in $\grp=\torus\times\R^3$ to Fourier multipliers
in the Euclidean space $\grph=\R\times\R^3$, 
where more classical tools for the identification of $\LR{q}$ multipliers
are available.
This strategy is used several times in the proof of the following theorem
that establishes existence of solutions to \eqref{sys:Stokes.tp.mod.R3} 
in an $\LR{q}$ framework.

\begin{thm}\label{thm:Stokes.tp.mod.R3}
Let $\tay>0$ and $s\in\R$, and set $\per_\tay\coloneqq\frac{2\pi}{\tay}$ and
$\torus\coloneqq\R/\per_\tay\Z$.
For each $f\in\LR{q}(\torus\times\R^3)^3$ there exists 
a solution $\np{\uvel,\upres}$ to \eqref{sys:Stokes.tp.mod.R3} with
\[
\uvel\in\WSR{1}{q}(\torus;\LRloc{q}(\R^3)^3)\cap\LR{q}(\torus;\WSRloc{2}{q}(\R^3)^3),
\qquad
\upres\in\LR{q}(\torus;\WSRloc{1}{q}(\R^3))
\]
that satisfies
\begin{equation}\label{est:Stokes.tp.mod.R3}
\norm{\dist(s,\tay\Z)\,\uvel}_{q}
+\norm{is\uvel+\pt\uvel}_{q}
+\norm{\grad^2 \uvel}_{q}
+\norm{\grad \upres}_{q}
\leq C\norm{f}_{q}
\end{equation}
as well as
\begin{align}
\label{est:Stokes.tp.mod.R3.grad}
\norm{\grad\uvel}_{\LR{q}(\torus;\LR{3q/(3-q)}(\R^3))}
+\norm{\upres}_{\LR{q}(\torus;\LR{3q/(3-q)}(\R^3))}
&\leq C\norm{f}_{q}
\qquad\tif q<3,
\\
\label{est:Stokes.tp.mod.R3.fct}
\norm{\uvel}_{\LR{q}(\torus;\LR{3q/(3-2q)}(\R^3))}
&\leq C\norm{f}_{q}
\qquad\tif q<3/2,
\end{align}
for a constant $C=C(q)>0$.
Moreover, if $\np{\wvel,\wpres}\in\LRloc{1}(\torus\times\R^3)^{3+1}$
is another distributional solution
to \eqref{sys:Stokes.tp.mod.R3}, then the following holds:
\begin{enumerate}[label=\roman*.]
\item
\label{it:Stokes.tp.mod.R3.uniqueness.i}
If $\grad^2\wvel,\ is\wvel+\pt\wvel\in\LR{q}(\torus\times\R^3)$,
then 
\[
is\wvel+\pt\wvel=is\uvel+\pt\uvel,
\qquad
\grad^2\wvel=\grad^2\uvel,
\qquad
\grad\wpres=\grad\upres.
\]
\item
\label{it:Stokes.tp.mod.R3.uniqueness.ii}
If $q<3/2$ or $s\not\in\tay\Z$, and if 
$\wvel\in\LR{1}(\torus;\LR{r}(\R^3)^3)$ for some $r\in(1,\infty)$,
then $\uvel=\wvel$ and $\upres=\wpres+d$ for a (space-independent) function 
$d\colon\torus\to\R$.
\end{enumerate}
\end{thm}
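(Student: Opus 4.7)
The plan is to construct the solution and prove the stated estimates via Fourier analysis on the locally compact abelian group $\grp=\torus\times\R^3$, combining the transference principle of Theorem \ref{thm:TransferencePrinciple} with classical Marcinkiewicz-type multiplier estimates on $\R\times\R^3$. Since \eqref{sys:Stokes.tp.mod.R3} has constant coefficients, its solution is given explicitly in Fourier space, which reduces the whole theorem to a collection of multiplier bounds.

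Applying $\FT_\grp$ to the system, taking divergence of the momentum equation, and exploiting $\xi\cdot\ft{\uvel}=0$, one finds on $\dualgrp=\Z\times\R^3$ that
\[
\ft{\uvel}(k,\xi)=\bigl(i(s+\tay k)+\snorm{\xi}^2\bigr)^{-1} P(\xi)\,\ft{f}(k,\xi),
\qquad
\ft{\upres}(k,\xi)=-i\snorm{\xi}^{-2}\,\xi\cdot\ft{f}(k,\xi),
\]
where $P(\xi)\coloneqq I-\xi\otimes\xi/\snorm{\xi}^2$ is the Helmholtz projection. The estimate on $\grad\upres$, together with the embedding bound for $\upres$, follows immediately from the Calder\'on--Zygmund theory of Riesz transforms applied pointwise in $t$, since $\ft{\upres}$ depends neither on $k$ nor on $s$.

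For the velocity contribution to \eqref{est:Stokes.tp.mod.R3} I would treat the three multipliers corresponding to $\dist(s,\tay\Z)\,\uvel$, $\grad^2\uvel$, and $(is+\pt)\uvel$ by transference. Each admits a continuous extension from $\dualgrp$ to $\grph\coloneqq\R\times\R^3$ obtained by replacing $k\in\Z$ with $\eta\in\R$; for instance, the extension associated with $\dist(s,\tay\Z)\,\uvel$ is
\[
M_0(\eta,\xi)\coloneqq\frac{\dist(s,\tay\Z)\,P(\xi)}{i(s+\tay\eta)+\snorm{\xi}^2},
\]
and the other two extensions are built analogously. Using $\dist(s,\tay\Z)\le\snorm{s+\tay k}$ for every $k\in\Z$, one verifies by direct computation that each extended symbol (after possibly combining with a smooth cut-off that keeps its values on $\Z\times\R^3$ unchanged) is bounded and satisfies the Marcinkiewicz condition with respect to the parabolic dilations $(\eta,\xi)\mapsto(\mu^2\eta,\mu\xi)$ on $\R\times\R^3$. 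A Marcinkiewicz-type multiplier theorem then produces an $\LR{q}(\grph)$ bound for the associated operator, and Theorem \ref{thm:TransferencePrinciple} transfers it to $\LR{q}(\grp)$, yielding \eqref{est:Stokes.tp.mod.R3} with a constant independent of $s\in\R$ and $\tay>0$.

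The embeddings \eqref{est:Stokes.tp.mod.R3.grad} and \eqref{est:Stokes.tp.mod.R3.fct} then follow from \eqref{est:Stokes.tp.mod.R3} by the classical homogeneous Sobolev inequalities in $\R^3$ applied pointwise in $t$ to $\uvel(t,\cdot)$, which by construction lies in the appropriate homogeneous Sobolev space. For uniqueness, setting $\np{v,r}\coloneqq\np{\uvel-\wvel,\upres-\wpres}$, the Fourier transform of the homogeneous equation forces $\ft v$ and $\ft r$ to be supported on the singular locus $\set{\xi=0}\cup\set{s+\tay k=0}$ of the symbol; under hypothesis \ref{it:Stokes.tp.mod.R3.uniqueness.i} the quantities stated there must vanish, and under \ref{it:Stokes.tp.mod.R3.uniqueness.ii} the additional restriction $q<3/2$ or $s\notin\tay\Z$ is precisely what rules out the resonant contributions concentrated at $\xi=0$. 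The main technical obstacle, which is also where the weight $\dist(s,\tay\Z)$ and the uniformity of the constants enter, is the verification of the Marcinkiewicz/Mikhlin conditions for the extended symbols near the resonant locus $\set{s+\tay\eta=0,\,\xi=0}$: the constants must remain finite as $s$ approaches the lattice $\tay\Z$, and only the weighted form of the estimate on $\uvel$ can survive.
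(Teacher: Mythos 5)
Your overall strategy — Fourier multipliers on $\grp=\torus\times\R^3$, extension to $\R\times\R^3$, Marcinkiewicz bounds, and transference via Theorem~\ref{thm:TransferencePrinciple} — is exactly the paper's approach, and your treatments of the pressure, the Sobolev embeddings, and the uniqueness part are all in the right spirit. But there are two concrete gaps.

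First, you skip the paper's initial reduction to $\snorm{s}\le\tay/2$ (replace $s$ by $s-\tay\ell$ and multiply $f$, $\uvel$, $\upres$ by $\e^{\pm i\tay\ell t}$, which the periodicity $\per_\tay=2\pi/\tay$ makes harmless). This is not cosmetic: you correctly note that a cutoff is needed near the resonant point $(\eta,\xi)=(-s/\tay,0)$ of the extended denominator $D_s(\eta,\xi)=i(s+\tay\eta)+\snorm{\xi}^2$, and that the cutoff must leave the multiplier unchanged on $\Z\times\R^3$ in order for transference to apply. If $s$ is close to some $\tay\ell$ with $\ell\ne 0$, the singular point $-s/\tay$ is close to the integer $-\ell$, and the only way a cutoff can remain equal to $1$ at $-\ell$ while killing the singularity is to shrink its support — which blows up the Marcinkiewicz constant as $\dist(s,\tay\Z)\to 0$. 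Only after the reduction, when the singular $\eta$-coordinate lies in $[-1/2,1/2]$, can the cutoff (the paper uses $\cutoff\bp{1+\tay\eta/s}$) equal $1$ at every integer while giving $s$-uniform bounds; one should also record that your extended symbol $M_0=\dist(s,\tay\Z)P(\xi)/D_s$ is, as written, unbounded, so the cutoff is mandatory, not an optional fix.

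Second, and more seriously, the case $s\in\tay\Z$ (equivalently $s=0$ after the reduction) cannot be handled by your multiplier representation at all: there $D_s(k,\xi)$ vanishes at the point $(k,\xi)=(-s/\tay,0)\in\Z\times\R^3$ itself, so no admissible cutoff can preserve the values on $\Z\times\R^3$ while regularizing the symbol. The estimates on $\grad^2\uvel$, $\pt\uvel$, $\grad\upres$ and the embeddings still have to be proved in that case. The paper does this separately by splitting $f=f_0+f_\bot$ as in \eqref{eq:dec.f}, invoking the known maximal-regularity result for the time-periodic Stokes system \cite{Kyed_mrtpns}, and arguing by scaling to get a $\per_\tay$-independent constant. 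Without this step your argument leaves the resonant values of $s$ — which are precisely the ones relevant to the essential spectrum \eqref{eq:spectrum.usual} — unresolved.
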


\begin{proof}
At first, we show that it suffices to consider $s\in\R$ with $\snorm{s}\leq\tay/2$.
Indeed, for $s\in\R$ there exists $\ell\in\Z$ such that
$\snorm{s-\tay\ell}\leq\tay/2$.
We set $\widetilde{s}=s-\tay\ell$ and $\widetilde{f}(t,x)=f(t,x)\e^{i\tay\ell t}$,
and assume that $\np{\wvel,\wpres}$
is a solution to \eqref{sys:Stokes.tp.mod.R3} 
satisfying \eqref{est:Stokes.tp.mod.R3}--\eqref{est:Stokes.tp.mod.R3.fct}
with $s$ and $f$ replaced with $\widetilde{s}$ and $\widetilde{f}$,
respectively.
We now define the $\per_\tay$-periodic functions 
$\uvel(t,x)\coloneqq\wvel(t,x)\e^{-i\tay\ell t}$
and $\upres(t,x)\coloneqq\wpres(t,x)\e^{-i\tay\ell t}$.
Then 
$\np{\uvel,\upres}$
satisfies the original problem \eqref{sys:Stokes.tp.mod.R3} 
and the corresponding estimates \eqref{est:Stokes.tp.mod.R3}--\eqref{est:Stokes.tp.mod.R3.fct}.
Therefore, it is sufficient to treat the case $\snorm{s}\leq\tay/2$ in the following.

In the case $s=0$, the system \eqref{sys:Stokes.tp.mod.R3}
reduces to the classical time-periodic Stokes system,
for which existence of a solution $\np{\uvel,\upres}$
was shown in \cite{Kyed_mrtpns}.
More precisely, in \cite{Kyed_mrtpns} the right-hand side was 
decomposed as $f=f_0+f_\bot$
with 
\begin{equation}\label{eq:dec.f}
f_0(x)=\int_{\torus} f(t,x)\,\dt, \qquad
f_\bot(t,x)=f(t,x)-f_0(x),
\end{equation}
and existence of a solution 
$\np{\uvel,\upres}=\np{\uvel_0+\uvel_\bot,\upres_0+\upres_\bot}$,
decomposed in the same fashion as $f$, 
was shown,
which satisfies
\[
\begin{aligned}
\norm{\grad^2\uvel_0}_{\LR{q}(\R^3)}
+\norm{\grad\upres_0}_{\LR{q}(\R^3)}
&\leq C_1 \norm{f_0}_{\LR{q}(\R^3)},
\\
\norm{\pt\uvel_\bot}_{\LR{q}(\torus\times\R^3)}
+\norm{\grad^2\uvel_\bot}_{\LR{q}(\torus\times\R^3)}
+\norm{\grad\upres_\bot}_{\LR{q}(\torus\times\R^3)}
&\leq C_2 \norm{f_\bot}_{\LR{q}(\torus\times\R^3)}
\end{aligned}
\]
for constants $C_1=C_1(q)$ and $C_2=C_2(q,\per_\tay)$.
In particular, combining these inequalities,
we end up with \eqref{est:Stokes.tp.mod.R3}
with the constant $C=C_1+C_2$.
Moreover, by a classical scaling argument
we see that the constant $C$ in \eqref{est:Stokes.tp.mod.R3}
is independent of $\per_\tay$.
Finally, \eqref{est:Stokes.tp.mod.R3.grad} and \eqref{est:Stokes.tp.mod.R3.fct}
follow from Sobolev's inequality in space.

Now let us consider the case $0\neq\snorm{s}\leq\tay/2$.
At first, let $f\in\SR(\torus\times\R^3)^3$.
Computing the divergence of \eqrefsub{sys:Stokes.tp.mod.R3}{1},
we obtain $\Delta\upres=\Div f$.
By means of the Fourier transform $\FT_\grp$ 
on the locally compact abelian group $\grp\coloneqq\torus\times\R^3$,
we conclude $-\snorm{\xi}^2\FT_\grp\nb{\upres}=i\xi\cdot\FT_\grp\nb{f}$,
so that
\begin{equation}\label{eq:Stokes.tp.mod.R3.gradp}
\upres
=\iFT_{\grp}\Bb{\frac{-i\xi}{\snorm{\xi}^2}\FT_\grp\nb{f}},
\qquad
\grad\upres
=\iFT_{\grp}\Bb{\frac{\xi\otimes\xi}{\snorm{\xi}^2}\FT_\grp\nb{f}}.
\end{equation}
In particular, $\upres$ is well defined as a distribution in $\TDR(\grp)$, 
and by continuity of the Riesz transforms $\LR{q}(\R^3)\to\LR{q}(\R^3)$, 
which can be extended to continuous operators 
$\LR{q}(\grp)\to\LR{q}(\grp)$,
we conclude
\begin{equation}\label{est:Stokes.tp.mod.R3.gradp}
\norm{\grad\upres}_{q}\leq C\norm{f}_{q}.
\end{equation}
Next we apply the Fourier transform to \eqrefsub{sys:Stokes.tp.mod.R3}{1}.
In view of \eqref{eq:Stokes.tp.mod.R3.gradp},
this leads to the representation formula
\begin{equation}\label{eq:Stokes.tp.mod.R3.u}
\uvel=\iFT_\grp\bb{m\,\FT_\grp\nb{f-\grad\upres}}
=\iFT_\grp\Bb{m\Bp{\idmatrix-\frac{\xi\otimes\xi}{\snorm{\xi}^2}}\FT_\grp\nb{f}}
\end{equation}
where $\idmatrix\in\R^{3\times3}$ is the identity matrix and
\[
m\colon\Z\times\R^3\to\R,\qquad
m(k,\xi)\coloneqq\frac{1}{is+i\tay k +\snorm{\xi}^2}.
\]
Since $0\neq\snorm{s}\leq\tay/2$,
the denominator 
\[
D_s(k,\xi)\coloneqq is+i\tay k +\snorm{\xi}^2
\]
in the definition of $m$ has no zeros $(k,\xi)\in\Z\times\R^3$,
so that $m$ is a well-defined bounded function.
Hence $\uvel\in\TDR(\grp)$ is well defined by means of a Fourier multiplier in $\grp$.
To deduce estimate \eqref{est:Stokes.tp.mod.R3},
it remains to derive $\LR{q}$ estimates for  
$\dist(s,\tay\Z)\,\uvel$, $is\uvel+\partial_t\uvel$ and $\grad^2\uvel$,
that is,
estimates of $is\uvel$, $\partial_t\uvel$ and $\grad^2\uvel$.
In virtue of the representation formula \eqref{eq:Stokes.tp.mod.R3.u}
we have
\begin{equation}\label{eq:Stokes.tp.mod.R3.uder}
\begin{aligned}
is\uvel
&=\iFT_\grp\Bb{m_0\Bp{\idmatrix-\frac{\xi\otimes\xi}{\snorm{\xi}^2}}\FT_\grp\nb{f}},
\\
\pt\uvel
&=\iFT_\grp\Bb{m_1\Bp{\idmatrix-\frac{\xi\otimes\xi}{\snorm{\xi}^2}}\FT_\grp\nb{f}},
\\
\partial_j\partial_\ell\uvel
&=\iFT_\grp\Bb{m_{j\ell}\Bp{\idmatrix-\frac{\xi\otimes\xi}{\snorm{\xi}^2}}\FT_\grp\nb{f}},
\end{aligned}
\end{equation}
with $m_0,\,m_1,\,m_{j\ell}\colon\Z\times\R^3\to\R$ defined by
\[
m_0(k,\xi)\coloneqq\frac{is}{D_s(k,\xi)},
\qquad
m_1(k,\xi)\coloneqq\frac{i\tay k}{D_s(k,\xi)},
\qquad
m_{j\ell}(k,\xi)\coloneqq\frac{-\xi_j\xi_\ell}{D_s(k,\xi)}
\]
for $j,\ell=1,2,3$.
We set $\mathfrak m
\coloneqq\setcl{m_0,m_1,m_{j\ell}}{j,\ell\in\set{1,2,3}}$.
Then
the $\LR{q}$ estimate \eqref{est:Stokes.tp.mod.R3} follows 
if all 
$\widetilde{m}\in\mathfrak m$
can be identified
as $\LR{q}(\grp)$ multipliers.
For this purpose,
we employ the transference principle from Theorem \ref{thm:TransferencePrinciple}.
Let $\cutoff\in\CRi(\R)$ with $0\leq\cutoff\leq1$ and
such that $\cutoff(x)=0$ for $\snorm{x}\leq 1/2$ and $\cutoff(x)=1$ for $\snorm{x}\geq1$.
We define the functions $M_0,\,M_1,\,M_{j\ell}\colon\R\times\R^3\to\C$,
\[
M_0(\eta,\xi)\coloneqq\frac{is\,\cutoff\bp{1+\frac{\tay\eta}{s}}}{D_s(\eta,\xi)}, 
\quad
M_1(\eta,\xi)\coloneqq\frac{i\tay\eta\,\cutoff\bp{1+\frac{\tay\eta}{s}}}{D_s(\eta,\xi)}, 
\quad
M_{j\ell}(\eta,\xi)\coloneqq\frac{\xi_j\xi_\ell\,\cutoff\bp{1+\frac{\tay\eta}{s}}}{D_s(\eta,\xi)},
\]
and we set $\mathfrak M\coloneqq\setcl{M_0,M_1,M_{j\ell}}{j,\ell\in\set{1,2,3}}$.
Observe that 
\[
\cutoff\bp{1+\frac{\tay\eta}{s}}=0 \quad \tif \snorm{s+\tay\eta}\leq \snorm{s}/2,
\qquad
\cutoff\bp{1+\frac{\tay\eta}{s}}=1 \quad \tif \snorm{s+\tay\eta}\geq \snorm{s},
\]
so that the numerator of each term vanishes in a neighborhood of the 
only zero $(\eta,\xi)=(-s/\tay,0)$ of the common denominator $D_s(\eta,\xi)$.
We thus conclude that every multiplier 
$\widetilde M\in\mathfrak M$
is a well-defined continuous function.
Moreover, we have $m_0=\restriction{M_0}{\Z\times\R^3}$,
$m_1=\restriction{M_1}{\Z\times\R^3}$ and
$m_{j\ell}=\restriction{M_{j\ell}}{\Z\times\R^3}$.
Hence, by the transference principle from Theorem \ref{thm:TransferencePrinciple},
all elements of $\mathfrak m$ 
are $\LR{q}(\grp)$ multipliers
if all elements of $\mathfrak M$ are $\LR{q}(\R\times\R^3)$ multipliers.
By employing the technical inequalities
\[
\begin{aligned}
\snorm{s+\tay\eta}\geq \snorm{s}/2 
&\ \implies \
\snormL{\frac{\tay\eta}{D_s(\eta,\xi)}}
\leq \snormL{\frac{\tay\eta}{s+\tay\eta}}
\leq 1 + \snormL{\frac{s}{s+\tay\eta}}
\leq 3,
\\
\snorm{s+\tay\eta}\leq \snorm{s} 
&\ \implies \ 
\snormL{\frac{\tay\eta}{s}}
\leq 1 + \snormL{\frac{s+\tay\eta}{s}}
\leq 2,
\end{aligned}
\]
a lengthy but elementary calculation shows
\[
\sup
\setcl{\snorml{
\eta^{\alpha}\xi^{\beta}\partial_\eta^{\alpha}\partial_\xi^\beta
\widetilde{M}(\eta,\xi)
}}
{\alpha\in\set{0,1},\,
\beta\in\set{0,1}^3,\,
\np{\eta,\xi}\in\R\times\R^3
}
\leq C
\]
for all $\widetilde M\in\mathfrak{M}$ and
for an absolute constant $C>0$ that is independent of $s$.
By the Marcinkiewicz multiplier theorem 
(see \cite[Corollary 5.2.5]{Grafakos1} for example),
we conclude that $\widetilde{M}$ is an $\LR{q}(\R\times\R^3)$ multiplier
such that
\[
\forall h\in\SR(\R\times\R^3):\quad
\norm{\iFT_{\R\times\R^3}\bb{\widetilde{M}\FT_{\R\times\R^3}\nb{h}}}_{q}
\leq C \norm{h}_{q},
\]
where $C=C(q)$ is independent of $s$.
Since all $\widetilde M\in\mathfrak{M}$ are continuous, 
the transference principle (Theorem \ref{thm:TransferencePrinciple})
now implies that all $\widetilde{m}\in\mathfrak{m}$
are $\LR{q}(\grp)$ multipliers with
\[
\forall g\in\SR(\grp):\quad
\norm{\iFT_{\grp}\bb{\widetilde{m}\,\FT_{\grp}\nb{g}}}_{q},
\leq C \norm{g}_{q}
\]
where $C=C(q)$.
By combining these estimates with the continuity of the Riesz transforms,
the representation formulas in \eqref{eq:Stokes.tp.mod.R3.uder}
yield \eqref{est:Stokes.tp.mod.R3}.
Estimates \eqref{est:Stokes.tp.mod.R3.grad} and \eqref{est:Stokes.tp.mod.R3.fct}
now follow from Sobolev's inequality.
In summary, 
for $f\in\SR(\grp)$ we have now constructed a solution to \eqref{sys:Stokes.tp.mod.R3}
with the desired properties.
A classical approximation argument
based on the estimates \eqref{est:Stokes.tp.mod.R3}--\eqref{est:Stokes.tp.mod.R3.fct} 
finally yields the existence of a solution for any $f\in\LR{q}(\grp)$. 

It remains to prove the uniqueness assertion for arbitrary $s\in\R$.
We consider the difference 
$\np{\tuvel,\tupres}=\np{\uvel-\wvel,\upres-\wpres}\in\LRloc{1}(\grp)^{3+1}$,
which is a solution  
to \eqref{sys:Stokes.tp.mod.R3} with $f=0$.
As above, computing the divergence of both sides of \eqrefsub{sys:Stokes.tp.mod.R3}{1},
we conclude $\Delta\tupres=0$, which implies
$\supp\FT_\grp\nb{\tupres}\subset\Z\times\set{0}$.
Since an application of $\FT_\grp$ to \eqrefsub{sys:Stokes.tp.mod.R3}{1}
leads to
$D_s(k,\xi)\FT_\grp\nb{\tuvel}=-i\xi\FT_\grp\nb{\tupres}$
with $D_s(k,\xi)= is+i\tay k +\snorm{\xi}^2$,
we deduce
\[
\supp\bb{D_s(k,\xi)\FT_\grp\nb{\tuvel}}
\subset\Z\times\set{0}.
\]
Since $D_s(k,\xi)$ can only vanish for $\xi=0$, 
we deduce
$\supp\FT_\grp\nb{\tuvel}
\subset\Z\times\set{0}$.
Hence, $\tuvel\in\LRloc{1}(\grp)$
implies $\supp\FT_{\R^3}\nb{\tuvel}(t,\cdot)\subset\set{0}$
for a.a.~$t\in\torus$,
so that $\tuvel(t,\cdot)$ is a polynomial 
for a.a.~$t\in\torus$. 
In the same way we show that $\upres(t,\cdot)$ is a polynomial for a.a.~$t\in\torus$.
This has the following consequences 
in the two distinguished cases.
In case \ref{it:Stokes.tp.mod.R3.uniqueness.i}~
we have $\grad^2\tuvel,\,\grad\tupres\in\LR{q}(\grp)$.
Since both are polynomials in space a.e.~in $\torus$,
this is only possible if $\grad^2\tuvel=0$ and $\grad\tupres=0$.
In virtue of \eqrefsub{sys:Stokes.tp.mod.R3}{1}, 
this also implies $is\tuvel+\pt\tuvel=0$,
which shows the assertion in this case.
In case \ref{it:Stokes.tp.mod.R3.uniqueness.ii}~
we have $\tuvel\in\LR{1}(\torus;\LR{r_0}(\R^3)^3+\LR{r}(\R^3)^3)$
with $r_0=3q/(3-2q)$ if $q<3/2$, and $r_0=q$ if $s\not\in\tay\Z$.
Since $\tuvel(t,\cdot)$ is a polynomial 
for a.a.~$t\in\torus$,
this is only possible if $\tuvel=0$,
and returning to \eqrefsub{sys:Stokes.tp.mod.R3}{1}, we also conclude $\grad\upres=0$.
In total, this completes the proof.
\end{proof}

Now let us consider the modified time-periodic Stokes problem with rotating effect
\begin{equation}\label{sys:Stokes.tp.rot.mod.R3}
\begin{pdeq}
is\uvel
+\pt\uvel
+\tay\rotterm{\uvel}
- \Delta \uvel
+ \grad \upres 
&= f
&&\tin\torus\times\R^3,
\\
\Div\uvel
&=0 
&& \tin\torus\times\R^3,
\end{pdeq}
\end{equation}
which differs from \eqref{sys:Stokes.tp.mod.R3} 
by the rotational term $\tay\rotterm{\uvel}$.
In the particular case that the angular velocity of the rotation $\tay$ 
coincides with the angular frequency $\perf$ associated to the time period $\per$,
this rotational term can be absorbed in the time derivative by a suitable transformation.
In this way, we reduce \eqref{sys:Stokes.tp.rot.mod.R3} to \eqref{sys:Stokes.tp.mod.R3},
and we transfer existence and uniqueness as well as \textit{a priori} estimates 
from Theorem \ref{thm:Stokes.tp.mod.R3}.
Observe that the restriction $\tay=\perf$ is crucial for this procedure.

\begin{thm}\label{thm:Stokes.tp.rot.mod.R3}
Let $\tay>0$ and $s\in\R$, and set $\per_\tay\coloneqq\frac{2\pi}{\tay}$ and
$\torus\coloneqq\R/\per_\tay\Z$.
For each $f\in\LR{q}(\torus\times\R^3)^3$ there exists 
a solution $\np{\uvel,\upres}$ with
\[
\uvel\in\WSR{1}{q}(\torus;\LRloc{q}(\R^3)^3)\cap\LR{q}(\torus;\WSRloc{2}{q}(\R^3)^3),
\qquad
\upres\in\LR{q}(\torus;\WSRloc{1}{q}(\R^3))
\]
to \eqref{sys:Stokes.tp.rot.mod.R3}
that satisfies
\begin{equation}\label{est:Stokes.tp.rot.mod.R3}
\norm{\dist(s,\tay\Z)\,\uvel}_{q}
+\norm{is\uvel+\pt\uvel+\tay\rotterm{\uvel}}_{q}
+\norm{\grad^2 \uvel}_{q}
+\norm{\grad \upres}_{q}
\leq C\norm{f}_{q}
\end{equation}
as well as
\eqref{est:Stokes.tp.mod.R3.grad} and \eqref{est:Stokes.tp.mod.R3.fct}
for a constant $C=C(q)>0$.

Moreover, if $\np{\wvel,\wpres}\in\LRloc{1}(\torus\times\R^3)^{3+1}$
is another distributional solution
to \eqref{sys:Stokes.tp.rot.mod.R3}, then the following holds:
\begin{enumerate}[label=\roman*.]
\item
If $\grad^2\wvel,\ is\wvel+\pt\wvel+\tay\rotterm{\wvel}\in\LR{q}(\torus\times\R^3)$,
then 
\[
\begin{aligned}
is\wvel+\pt\wvel+\tay\rotterm{\wvel}&=is\uvel+\pt\uvel+\tay\rotterm{\uvel},
\\
\grad^2\wvel=\grad^2\uvel,
\qquad&\qquad
\grad\wpres=\grad\upres.
\end{aligned}
\]
\item
If $q<3/2$ or $s\not\in\tay\Z$, and if 
$\wvel\in\LR{1}(\torus;\LR{r}(\R^3)^3)$ for some $r\in(1,\infty)$,
then $\uvel=\wvel$ and $\upres=\wpres+d$ for a (space-independent) function 
$d\colon\torus\to\R$.
\end{enumerate}
\end{thm}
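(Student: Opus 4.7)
The strategy is to reduce \eqref{sys:Stokes.tp.rot.mod.R3} to the non-rotating problem \eqref{sys:Stokes.tp.mod.R3} by passing to an inertial frame, thereby exploiting Theorem \ref{thm:Stokes.tp.mod.R3}. Let $E_1\in\R^{3\times 3}$ be the skew-symmetric matrix determined by $E_1 x=\eone\wedge x$, and set $Q(t)\coloneqq\exp(\tay t E_1)$, which is a rotation about the $x_1$-axis. Because $\per_\tay=2\pi/\tay$, we have $Q(\per_\tay)=\idmatrix$, so $Q$ is $\per_\tay$-periodic; this is precisely the point where the restriction $\per_\tay=2\pi/\tay$ enters decisively. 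Moreover, $Q$ commutes with $E_1$ and hence satisfies $Q^\transpose(\eone\wedge a)=\eone\wedge(Q^\transpose a)$ for all $a\in\R^3$. Introduce the transformed data $\tf(t,x)\coloneqq Q(t)\,f\bp{t,Q(t)^\transpose x}$; orthogonality of $Q$ together with a measure-preserving change of variables yields $\norm{\tf}_q=\norm{f}_q$, and $\tf$ is again $\per_\tay$-periodic.

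By Theorem \ref{thm:Stokes.tp.mod.R3} applied to $\tf$, there is a solution $\np{\Uvel,\Upres}$ of \eqref{sys:Stokes.tp.mod.R3} with right-hand side $\tf$, satisfying the estimates \eqref{est:Stokes.tp.mod.R3}--\eqref{est:Stokes.tp.mod.R3.fct}. Define
\[
\uvel(t,y)\coloneqq Q(t)^\transpose\,\Uvel\bp{t,Q(t)y},
\qquad
\upres(t,y)\coloneqq\Upres\bp{t,Q(t)y},
\]
which are again $\per_\tay$-periodic. The heart of the argument is the identity
\[
Q(t)^\transpose\,(\pt\Uvel)\bp{t,Q(t)y}
=\pt\uvel(t,y)+\tay\,\rotterm{\uvel}(t,y),
\]
which follows from $\dot Q=\tay E_1 Q$, the chain rule (noting that $\dot Q\,y=\tay(\eone\wedge Qy)$), and the commutativity of $Q$ with the cross product by $\eone$. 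Combined with the easily verified relations $Q^\transpose(\Delta\Uvel)(t,Qy)=\Delta\uvel$, $Q^\transpose(\grad\Upres)(t,Qy)=\grad\upres$, $\Div_y\uvel=(\Div_x\Uvel)(t,Qy)=0$, and $Q^\transpose\tf(t,Qy)=f(t,y)$, this shows that $\np{\uvel,\upres}$ solves \eqref{sys:Stokes.tp.rot.mod.R3} with right-hand side $f$.

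All $\LR{q}$ norms appearing in \eqref{est:Stokes.tp.rot.mod.R3} and in \eqref{est:Stokes.tp.mod.R3.grad}--\eqref{est:Stokes.tp.mod.R3.fct} are invariant under the change of frame, since $Q(t)$ is orthogonal and $y\mapsto Q(t)y$ preserves Lebesgue measure. In particular $\norm{\pt\uvel+\tay\rotterm{\uvel}}_q=\norm{\pt\Uvel}_q$, $\norm{\grad^2\uvel}_q=\norm{\grad^2\Uvel}_q$, $\norm{\grad\upres}_q=\norm{\grad\Upres}_q$, and analogous identities hold for the $\LR{3q/(3-q)}$ and $\LR{3q/(3-2q)}$ norms. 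Applying the estimates for $\np{\Uvel,\Upres}$ from Theorem \ref{thm:Stokes.tp.mod.R3} and transferring back therefore yields \eqref{est:Stokes.tp.rot.mod.R3}. For uniqueness, given a second distributional solution $\np{\wvel,\wpres}$ of \eqref{sys:Stokes.tp.rot.mod.R3}, the pair $\bp{Q(t)\wvel(t,Q(t)^\transpose x),\,\wpres(t,Q(t)^\transpose x)}$ is a distributional solution of \eqref{sys:Stokes.tp.mod.R3} with right-hand side $\tf$; the invariance of the relevant norms under the transformation shows that this pair satisfies the integrability hypothesis of case \emph{i} or case \emph{ii} whenever $\np{\wvel,\wpres}$ does, so the corresponding uniqueness statement of Theorem \ref{thm:Stokes.tp.mod.R3} implies the claim after pulling back.

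The main technical obstacle is the verification of the highlighted identity for $Q^\transpose\pt\Uvel(t,Qy)$: three contributions arise from differentiating in $t$---namely $\dot Q^\transpose\,\Uvel$, the genuine $Q^\transpose\pt\Uvel$, and $Q^\transpose\grad_x\Uvel\cdot\dot Qy$---and one must keep track of signs carefully to see that the first and third combine into exactly $\tay\rotterm{\uvel}$. This is precisely where the commutation $Q E_1=E_1 Q$, equivalently the fact that $Q$ rotates about the same axis $\eone$ as the one appearing in $\rotterm{\cdot}$, is indispensable.
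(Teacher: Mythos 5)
Your proof is correct and follows essentially the same route as the paper: introduce the rotation $Q(t)=\exp(\tay t E_1)$ (the paper writes it as an explicit matrix $\rotmatrix_\tay(t)$), rotate the data to $\tf$, solve the non-rotating auxiliary problem via Theorem \ref{thm:Stokes.tp.mod.R3}, and rotate back; the periodicity of $Q$ with period $\per_\tay=2\pi/\tay$ is exactly what makes the transformed problem live on the same torus, and the key chain-rule identity combining $\dot Q^\transpose\Uvel$ and the advective term into $\tay\rotterm{\uvel}$ is the same computation carried out in the paper. The transfer of all $\LR{q}$ norms under the orthogonal, measure-preserving change of variables and the pull-back argument for uniqueness are also as in the paper.
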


\begin{proof}
The proof is based on the idea 
to absorb the rotational term $\tay\rotterm{\uvel}$ into the time derivative
by the coordinate transform
arising from the rotation matrix 
\begin{equation}\label{eq:Q.def}
\rotmatrix_\tay(t)\coloneqq\begin{pmatrix}
1 & 0 & 0\\
0 & \cos(\tay t) & -\sin(\tay t)\\
0 & \sin(\tay t) & \cos(\tay t)
\end{pmatrix}.
\end{equation}
Let $f\in\LR{q}(\torus\times\R^3)^3$ and define the vector field $\widetilde f$ by
\[
\widetilde{f}(t,x)
\coloneqq\rotmatrix_\tay(t) f(t,\rotmatrix_\tay(t)^\transpose x).
\]
Then $\widetilde{f}\in\LR{q}(\torus\times\R^3)^3$ 
since $\torus=\R/\per_\tay\Z$ with $\per_\tay=\frac{2\pi}{\tay}$.
By Theorem \ref{thm:Stokes.tp.mod.R3}
there exists a solution 
$\np{\tuvel,\tupres}$
to \eqref{sys:Stokes.tp.mod.R3} (with $f$ replaced by $\widetilde f$),
which satisfies the estimates \eqref{est:Stokes.tp.mod.R3}--\eqref{est:Stokes.tp.mod.R3.fct}.
We now define the $\per_\tay$-time-periodic functions
\[
\uvel(t,x)\coloneqq\rotmatrix_\tay(t)^\transpose\tuvel(t,\rotmatrix_\tay(t) x),
\qquad
\upres(t,x)\coloneqq\tupres(t,\rotmatrix_\tay(t) x).
\]
Since 
$\dot{\rotmatrix}_\tay(t)x
= \tay\eone\wedge \nb{\rotmatrix_\tay(t)x}
= \rotmatrix_\tay(t)\nb{\tay\eone\wedge x}$ 
for any $x\in\R^3$, a direct computation shows
\[
\begin{aligned}
&\pt\tuvel(t,x)
\\
&\quad
=\rotmatrix_\tay(t)
\bb{\pt\uvel(t,\rotmatrix_\tay(t)^\transpose x)
+\tay\eone\wedge\uvel(t,\rotmatrix_\tay(t)^\transpose x)
-\tay\eone\wedge\nb{\rotmatrix_\tay(t)^\transpose x}\cdot\grad\uvel(t,\rotmatrix_\tay(t)^\transpose x)}.
\end{aligned}
\]
Moreover, we have
\[
\begin{aligned}
\Delta \tuvel(t,x) 
&= \rotmatrix_\tay(t)\Delta\uvel(t,\rotmatrix_\tay(t)^\transpose x),
&
\partial_1\tuvel(t,x)
&=\rotmatrix_\tay(t)\partial_1\uvel(t,\rotmatrix_\tay(t)^\transpose x),
\\
\grad \tupres(t,x) 
&= \rotmatrix_\tay(t)\grad\upres(t,\rotmatrix_\tay(t)^\transpose x),
&\qquad
\Div\tuvel(t,x)
&=\Div\uvel(t,\rotmatrix_\tay(t)^\transpose x).
\end{aligned}
\]
Consequently, $\np{\uvel,\upres}$ is a solution to \eqref{sys:Stokes.tp.rot.mod.R3}
and satisfies the estimates \eqref{est:Stokes.tp.rot.mod.R3}, 
\eqref{est:Stokes.tp.mod.R3.grad}, \eqref{est:Stokes.tp.mod.R3.fct}.

For the uniqueness statement, 
we set
\[
\begin{aligned}
\tuvel(t,x)&\coloneqq\rotmatrix_\tay(t)\uvel(t,\rotmatrix_\tay(t)^\transpose x),
&\qquad
\tupres(t,x)&\coloneqq\upres(t,\rotmatrix_\tay(t)^\transpose x) 
\\
\twvel(t,x)&\coloneqq\rotmatrix_\tay(t)\wvel(t,\rotmatrix_\tay(t)^\transpose x),
&\qquad
\twpres(t,x)&\coloneqq\wpres(t,\rotmatrix_\tay(t)^\transpose x),
\\
\widetilde f(t,x)&\coloneqq\rotmatrix_\tay(t) f(t,\rotmatrix_\tay(t)^\transpose x).
\end{aligned}
\]
Mimicking the above calculations, we see that $\np{\tuvel,\tupres}$ 
and $\np{\twvel,\twpres}$ are solutions
to \eqref{sys:Stokes.tp.mod.R3} with the same right-hand side $\widetilde f$.
The uniqueness statement 
now follows from the corresponding statement in Theorem \ref{thm:Stokes.tp.mod.R3}.
\end{proof}

Observe that, by simply considering 
$s=0$ in \eqref{sys:Stokes.tp.rot.mod.R3},
we would obtain the original time-periodic problem 
\eqref{sys:Stokes.rot.tp},
and Theorem \ref{thm:Stokes.tp.rot.mod.R3}
yields existence of a unique solution to this problem.
However, since
we required $\tay=\perf$ in Theorem \ref{thm:Stokes.tp.rot.mod.R3},
we only obtain well-posedness in this special case,
and $\tay$ and $\per$ cannot be chosen independently.

In contrast, if we consider time-independent solutions $\np{\uvel,\upres}(t,x)=\np{\vvel,\vpres}(x)$
to \eqref{sys:Stokes.tp.rot.mod.R3} for $s\in\R$,
we obtain the resolvent problem
\eqref{sys:Stokes.rot.res.R3},
where the $\per$-dependence does not appear anymore.
From Theorem \ref{thm:Stokes.tp.rot.mod.R3}
we can thus extract Theorem \ref{thm:Stokes.rot.res.R3} as the final result of this section.

\begin{proof}[Proof of Theorem \ref{thm:Stokes.rot.res.R3}]
For the proof we set $\torus\coloneqq\R/\per\Z$ with $\per=\frac{2\pi}{\tay}$.
At first, let $g\in\LR{q}(\R^3)^3$
and define $f(t,x)\coloneqq g(x)$. 
Then $f\in\LR{q}(\torus\times\R^3)^3$,
and by Theorem \ref{thm:Stokes.tp.rot.mod.R3} there exists a solution $\np{\uvel,\upres}$
to \eqref{sys:Stokes.tp.rot.mod.R3}.
Then
\[
\vvel(x)\coloneqq\int_\torus\uvel(t,x)\,\dt,
\qquad
\vpres(x)\coloneqq\int_\torus\upres(t,x)\,\dt,
\]
defines a solution $\np{\vvel,\vpres}$
to \eqref{sys:Stokes.rot.res.R3},
and estimates 
\eqref{est:Stokes.rot.res.R3}--\eqref{est:Stokes.rot.res.R3.fct}
follow directly.
With regard to uniqueness, observe that every solution to \eqref{sys:Stokes.rot.res.R3}
is a (time-independent) 
solution to \eqref{sys:Stokes.tp.rot.mod.R3},
so that the uniqueness statement follows immediately from 
Theorem \ref{thm:Stokes.tp.rot.mod.R3}.
\end{proof}

\section{The resolvent problem in an exterior domain}
\label{sec:resprob.extdom}

After having established well-posedness 
of the resolvent problem \eqref{sys:Stokes.rot.res.R3}
in $\R^3$,
we next consider the corresponding problem
in an exterior domain $\Omega\subset\R^3$,
given in \eqref{sys:Stokes.rot.res}.
The aim of this section is a proof of Theorem \ref{thm:Stokes.rot.res}.
At first,
we address the question of uniqueness
by considering \eqref{sys:Stokes.rot.res} for $g=0$.

\begin{lem}\label{lem:Oseen.rot.res.uniqueness}
Let $\Omega\subset\R^3$ be an exterior domain of class $\CR{1,1}$.
Let $\tay>0$, $s\in\R$, 
and let $\np{\vvel,\vpres}$ be a distributional solution to \eqref{sys:Stokes.rot.res}
with $g=0$ and
$\grad^2\vvel,\,\partial_1\vvel,\,\grad\vpres \in\LR{q}(\Omega)$
for some $r\in(1,\infty)$ and $\vvel\in\LR{r}(\Omega)$ for some $s\in(1,\infty)$.
Then 
$\vvel=0$ and $\vpres$ is constant.
\end{lem}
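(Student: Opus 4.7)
The plan is to reduce the exterior problem to the whole-space resolvent problem, invoke the unconditional uniqueness clause of Theorem \ref{thm:Stokes.rot.res.R3}(ii) to extract global integrability of $(\vvel,\vpres)$ outside a large ball, and then close the argument with a radially weighted energy identity. The one structural constraint I must respect is that Theorem \ref{thm:Stokes.rot.res} itself is not yet available here, so the whole-space result is the only global tool at my disposal.

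For the reduction, I would fix $R_0>0$ with $\R^3\setminus\Omega\subset\ball_{R_0}$, choose $\cutoff\in\CRci(\R^3)$ with $\cutoff\equiv 1$ on $\ball_{R_0}$ and $\supp\cutoff\subset\ball_{2R_0}$, and apply the Bogovski\u\i{} operator on the annulus $\ball_{2R_0}\setminus\ball_{R_0}$ to produce a compactly supported $h$ with $\Div h=\grad\cutoff\cdot\vvel$; the compatibility $\int\grad\cutoff\cdot\vvel\,\dx=0$ is supplied by $\Div\vvel=0$ together with $\vvel|_{\partial\Omega}=0$. Setting $W\coloneqq(1-\cutoff)\vvel+h$ and $\Pi\coloneqq(1-\cutoff)\vpres$ (both extended by $0$), the pair is divergence-free in $\R^3$ and satisfies
\[
is W+\tay\rotterm{W}-\Delta W+\grad\Pi=F\qquad \tin\R^3,
\]
with $F$ a compactly supported remainder built from commutators of $\cutoff$ with $\Delta$, $\grad$ and $\rotterm{\cdot}$ plus the contribution of $h$. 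The hypotheses $\grad^2\vvel,\grad\vpres\in\LR{q}(\Omega)$ and standard local Sobolev embeddings put $F\in\LR{q_0}(\R^3)$ for every $q_0\in(1,3/2)$. Choosing $q_0=6/5$ and applying Theorem \ref{thm:Stokes.rot.res.R3}, I obtain a distinguished solution $(V,P)$ with $V\in\LR{6}$, $\grad V\in\LR{2}$, $P\in\LR{2}$, $\grad^2V\in\LR{6/5}$; since $q_0<3/2$ and $W\in\LR{r}(\R^3)$ (inherited from $\vvel\in\LR{r}(\Omega)$), clause (ii) of that theorem forces $W=V$ and $\Pi=P+c$. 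Outside $\ball_{2R_0}$ this gives the decay $\vvel\in\LR{6}$, $\grad\vvel\in\LR{2}$, $\vpres-c\in\LR{2}$.

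With this decay in hand, the second stage is an energy argument. I would test \eqref{sys:Stokes.rot.res} against $\eta_R^2\bar\vvel$, where $\eta_R(x)=\eta(\snorm{x}/R)$ is a \emph{radial} cutoff. The real parts of $\int is\snorm{\vvel}^2\eta_R^2$ and $\int\tay(\eone\wedge\vvel)\cdot\bar\vvel\,\eta_R^2$ vanish for algebraic reasons ($is\snorm{\vvel}^2$ and $\eone\cdot(\vvel\wedge\bar\vvel)$ are purely imaginary). The transport part of the rotation integrates by parts to
\[
\realpart\int_\Omega -\tay\,\nsnonlinb{\eone\wedge x}{\vvel}\cdot\bar\vvel\,\eta_R^2\,\dx
=\tay\int_\Omega \snorm{\vvel}^2\,\eta_R\,(\eone\wedge x)\cdot\grad\eta_R\,\dx=0,
\]
since $\grad\eta_R\parallel x$ while $\eone\wedge x\perp x$; this cancellation is precisely why the radial cutoff is chosen and is the step that defeats the $\snorm{x}$-growth of the rotation coefficient. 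The Laplacian contributes $\int\eta_R^2\snorm{\grad\vvel}^2$ plus a cross term bounded, via H\"older and $\snorml{\ball_{2R}\setminus\ball_R}^{1/3}\lesssim R$, by $C\norm{\grad\vvel}_{\LR{2}(\ball_{2R}\setminus\ball_R)}\norm{\vvel}_{\LR{6}(\ball_{2R}\setminus\ball_R)}$; the pressure yields an analogous cross term bounded by $C\norm{\vpres-c}_{\LR{2}(\ball_{2R}\setminus\ball_R)}\norm{\vvel}_{\LR{6}(\ball_{2R}\setminus\ball_R)}$. Both tail norms vanish as $R\to\infty$ by absolute continuity of the Lebesgue integral, so monotone convergence with $\grad\vvel\in\LR{2}(\Omega)$ forces $\norm{\grad\vvel}_{\LR{2}(\Omega)}=0$. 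Connectedness of the exterior domain together with $\vvel|_{\partial\Omega}=0$ then yields $\vvel=0$, and the equation gives $\grad\vpres=0$.

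The main obstacle is the extension step: one must carefully check that the hypotheses on $(\vvel,\vpres)$ actually produce an $\LR{q_0}$ right-hand side $F$ with the \emph{strict} inequality $q_0<3/2$, since it is exactly that strictness which activates Theorem \ref{thm:Stokes.rot.res.R3}(ii) irrespective of whether $s\in\tay\Z$ --- so that the resonant points on the imaginary axis receive no special treatment here. Compactness of $\supp F$ and local Sobolev embeddings make this work for $q_0$ chosen small enough depending on $q$ and $r$. A secondary subtlety is the global $\LR{2}$-integrability of $\grad\vvel$ needed for monotone convergence: outside $\ball_{2R_0}$ it comes from the identification $W=V$, while near $\partial\Omega$ it follows from $\grad^2\vvel\in\LR{q}_{\mathrm{loc}}(\Omega)$ together with the $\CR{1,1}$-regularity of $\partial\Omega$.
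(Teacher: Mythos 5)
Your proposal is correct and follows the same overall architecture as the paper's argument: reduce to the whole-space resolvent problem to obtain global integrability, then run a weighted energy identity exploiting the radial geometry to kill both the unbounded transport term $(\eone\wedge x)\cdot\grad$ and the purely imaginary contributions of $is$ and $\eone\wedge\cdot$, concluding $\grad\vvel=0$. The paper, citing the earlier exterior-domain lemma from \cite{EiterKyed_ViscousFlowAroundRigidBodyPerformingTPMotion_2021}, runs the reduction somewhat differently: it bootstraps via classical elliptic regularity for the Stokes problem near the boundary together with Theorem~\ref{thm:Stokes.rot.res.R3} in the far field to upgrade the solution to $\grad\vvel\in\LR{r}(\Omega)$ for all $r\in(3/2,\infty]$ and $\vvel\in\LR{r}(\Omega)$ for all $r\in(3,\infty]$, and only then integrates the pointwise identity over $\Omega_R$ and sends $R\to\infty$ (the integration-over-balls choice encodes the same radial cancellation you make explicit with your $\eta_R$). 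Your version instead performs the extension once, via the Bogovski\u\i{} corrector, and then invokes the uniqueness clause~(ii) of Theorem~\ref{thm:Stokes.rot.res.R3} with a fixed $q_0=6/5<3/2$ to identify the extended field with the constructed whole-space solution; this cleanly delivers the decay rates you need without a full bootstrap and, as you correctly emphasize, activates clause~(ii) regardless of whether $s\in\tay\Z$, which is precisely what is needed to extend the lemma from the resonant case $s\in\tay\Z$ (treated in the cited reference) to all $s\in\R$. The one place you should tighten is the local boundary integrability: when $q<6/5$, the hypothesis $\grad^2\vvel\in\LR{q}(\Omega)$ together with $\CR{1,1}$-regularity of $\partial\Omega$ does not directly give $\grad\vvel\in\LR{2}_{\mathrm{loc}}$ near $\partial\Omega$; a short elliptic bootstrap for the local Stokes system (with the absorbed lower-order terms on the right-hand side) is needed there, which is exactly what the paper's regularity step supplies.
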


\begin{proof}
The proof follows exactly as in 
\cite[Lemma 5.6]{EiterKyed_ViscousFlowAroundRigidBodyPerformingTPMotion_2021}, 
where the statement was shown 
for the case $s\in\tay\Z$. 
Therefore, we only give a brief sketch here.
The idea is to employ a cut-off argument
that leads to a Stokes problem on a bounded domain
and to the resolvent problem \eqref{sys:Stokes.rot.res}
in the whole space,
both with error terms on the right-hand side.
Using classical elliptic regularity 
of the Stokes problem
and regularity properties 
for \eqref{sys:Stokes.rot.res}
established in Theorem \ref{thm:Stokes.rot.res.R3},
one can then show that
\[
\begin{aligned}
\forall r\in(1,\infty): &\quad 
is\vvel+\rottermsimple{\vvel},\,\grad^2\vvel,\,\grad\vpres
\in\LR{r}(\Omega),
\\
\forall r\in\big(\frac{3}{2},\infty]: &\quad
\grad\vvel\in\LR{r}(\Omega), 
\\
\forall r\in(3,\infty] : &\quad
\vvel\in\LR{r}(\Omega).
\end{aligned}
\]
Next we multiply 
\eqrefsub{sys:Stokes.rot.res}{1}
by $\vvel^\ast$, the complex conjugate of $\vvel$.
The above regularities enable us
to integrate the resulting identity over $\Omega_R$
and to pass to the limit $R\to\infty$.
Arguing as in \cite[Lemma 5.6]{EiterKyed_ViscousFlowAroundRigidBodyPerformingTPMotion_2021},
one obtains
\[
0 
= is\int_{\Omega}\snorm{\vvel}^2\,\dx
+\int_{\Omega}
\snorm{\grad\vvel}^2 \,\dx. 
\]
This yields $\grad\vvel=0$
and, in view of the imposed boundary conditions,
$\vvel=0$.
From \eqrefsub{sys:Stokes.rot.res}{1}
we finally conclude $\grad\vpres=0$, which completes the proof.
\end{proof}

In the next step we derive suitable \textit{a priori} estimates by a cut-off procedure. 
We begin with the following intermediate result.
For simplicity, we only consider the case $q<\frac{3}{2}$.

\begin{lem}\label{lem:Oseen.rot.res.estimates.pert}
Let $\Omega\subset\R^3$ be an exterior domain of class $\CR{1,1}$,
and $\rey\geq 0$, $\tay>0$ and $s\in\R$. 
Let $q\in(1,\infty)$ and $g\in\LR{q}(\Omega)^3$.
Consider a solution $\np{\vvel,\vpres}$ 
to \eqref{sys:Stokes.rot.res} that satisfies
\[
is\vvel+\tay\rotterm\vvel, \
\grad^2\vvel,\
\rey\partial_1\vvel,\
\grad\vpres
\in\LR{q}(\Omega)^3
\]
and $\vvel\in\LR{r_1}(\Omega)^3$, $\vpres\in\LR{\overline r_1}(\Omega)$ for some 
$r_1,\overline r_1\in(1,\infty)$.
Fix $R>0$ such that $\partial\Omega\subset\ball_{R}$.
If $\rey=0$ and $q\in(1,3/2)$,
then $\np{\vvel,\vpres}$ satisfies the estimate
\begin{equation}
\begin{aligned}
&\norm{\dist(s,\tay\Z) \,\vvel}_q
+\norm{is\vvel+\tay\rotterm\vvel}_{q}
+\norm{\grad^2 \vvel}_{q}
+\norm{\grad \vpres}_{q}
\\
&\qquad\qquad
+\norm{\grad\vvel}_{3q/(3-q)}
+\norm{\vvel}_{3q/(3-2q)}
+\norm{\vpres}_{3q/(3-q)}
\\
&\qquad\qquad\qquad\qquad
\leq C \bp{
\norm{g}_{q}+\np{1+\tay}\norm{\vvel}_{1,q;\Omega_{R}}+\norm{\vpres}_{q;\Omega_{R}}
+\snorm{s}\norm{\vvel}_{-1,q;\Omega_{R}}
}
\end{aligned}
\label{est:Stokes.rot.res.extdom.pert}
\end{equation}
for a constant $C=C(q,\Omega,R)>0$.
\end{lem}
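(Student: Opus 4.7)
The plan is to follow the cut-off strategy of \cite[Lem.~5.6]{EiterKyed_ViscousFlowAroundRigidBodyPerformingTPMotion_2021}: I would decompose $\vvel$ into a compactly supported interior piece (for which I apply classical Cattabriga regularity on a bounded $\CR{1,1}$-domain) and an exterior piece defined on all of $\R^3$ (for which I invoke Theorem \ref{thm:Stokes.rot.res.R3}). First, fix $R_1>R$ with $\partial\Omega\subset B_R$, choose $\chi\in\CRci(\R^3)$ with $\chi\equiv 1$ on $B_R$ and $\supp\chi\subset B_{R_1}$, and write $A:=B_{R_1}\setminus\overline{B_R}$. Since $\Div\vvel=0$ and $\vvel|_{\partial\Omega}=0$, the divergence theorem gives $\int_A \vvel\cdot\grad\chi\,\dx=0$, so the Bogovski\u{\i} operator on the annulus produces $w:=\bogopr_A(\vvel\cdot\grad\chi)\in\WSRN{1}{q}(A)^3$ (extended by zero to $\R^3$) with $\Div w=\vvel\cdot\grad\chi$ and $\norm{w}_{1,q}\leq C\norm{\vvel}_{q;\Omega_{R_1}}$; an additional $\WSR{2}{q}$-regularity of $w$ is inherited from $\vvel\in\WSR{2}{q}_{\mathrm{loc}}$, which follows from the hypotheses. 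I then set
$$
\vvel^{(1)}:=\chi\vvel-w,\quad \vvel^{(2)}:=(1-\chi)\vvel+w,\quad \vpres^{(1)}:=\chi\vpres,\quad \vpres^{(2)}:=(1-\chi)\vpres,
$$
so that both $\vvel^{(i)}$ are solenoidal, $\vvel^{(1)}$ has compact support in $\overline{\Omega_{R_1}}$ with zero trace on $\partial\Omega_{R_1}$, and $\vvel^{(2)}$ extends trivially to $\R^3$.

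Next, a direct computation using \eqref{sys:Stokes.rot.res} shows that $(\vvel^{(2)},\vpres^{(2)})$ satisfies \eqref{sys:Stokes.rot.res.R3} on $\R^3$ with right-hand side $(1-\chi)g+\restterm$, where $\restterm$ collects the commutator and Bogovski\u{\i}-correction terms of the schematic form $\grad\chi\cdot\grad\vvel$, $\vvel\,\Delta\chi$, $\vpres\,\grad\chi$, $\tay((\eone\wedge x)\cdot\grad\chi)\vvel$, $is\,w$, $\tay\rotterm{w}$, $\Delta w$, all supported in $A$. I would then invoke Theorem \ref{thm:Stokes.rot.res.R3} on $\R^3$ to obtain the full list of norms of $(\vvel^{(2)},\vpres^{(2)})$ appearing on the left of \eqref{est:Stokes.rot.res.extdom.pert}, bounded by $\norm{g}_q+\np{1+\tay}\norm{\vvel}_{1,q;\Omega_{R_1}}+\norm{\vpres}_{q;\Omega_{R_1}}+\snorm{s}\norm{w}_{q;A}$.

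For the interior piece, $(\vvel^{(1)},\vpres^{(1)})$ solves the Stokes system on the bounded $\CR{1,1}$-domain $\Omega_{R_1}$ with zero Dirichlet data and right-hand side $\chi g-is\vvel^{(1)}-\tay\rotterm{\vvel^{(1)}}+\restterm'$, where $\restterm'$ is a symmetric commutator remainder supported in $A$. Classical Cattabriga $\LR{q}$-regularity controls $\norm{\grad^2\vvel^{(1)}}_q$ and $\norm{\grad\vpres^{(1)}}_q$ by the $\LR{q}$-norm of the data. The commutator terms, $\chi g$, and $\tay\rotterm{\vvel^{(1)}}$ (whose transport coefficient $\eone\wedge x$ is bounded on $\Omega_{R_1}$) are handled directly in $\LR{q}$. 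The sensitive forcing $is\vvel^{(1)}=is\chi\vvel-is\,w$ I would route through the $\WSR{-1}{q}$-form of the Stokes estimate, using the continuity of $\bogopr_A$ on negative Sobolev scales to bound this contribution by $\snorm{s}\norm{\vvel}_{-1,q;\Omega_R}$ rather than $\snorm{s}\norm{\vvel}_{q;\Omega_R}$. Summing the two pieces via $\vvel=\vvel^{(1)}+\vvel^{(2)}$ delivers \eqref{est:Stokes.rot.res.extdom.pert}; the remaining Sobolev-type norms $\norm{\grad\vvel}_{3q/(3-q)}$, $\norm{\vvel}_{3q/(3-2q)}$, $\norm{\vpres}_{3q/(3-q)}$ on the left come from Theorem \ref{thm:Stokes.rot.res.R3} for the outer piece and from Sobolev embedding for the inner piece.

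The hardest part will be securing $\snorm{s}$ only in front of the weak norm $\norm{\vvel}_{-1,q;\Omega_R}$. Because Theorem \ref{thm:Stokes.rot.res.R3} does not separately control $\norm{is\vvel}_q$ --- only the combination $\norm{is\vvel+\tay\rotterm{\vvel}}_q$ --- any appearance of $\snorm{s}\norm{\vvel}_{q;\Omega_R}$ on the right-hand side would be unusable in the later absorption argument that removes the local terms on the way to \eqref{est:Stokes.rot.res}. Handling the bounded-domain Stokes step in the weak-data regime and exploiting the duality of the Bogovski\u{\i} operator on negative Sobolev scales is the only non-routine ingredient and drives the whole argument.
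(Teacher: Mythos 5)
Your overall strategy matches the paper's, which simply defers to \cite[Lemma 5.7]{EiterKyed_ViscousFlowAroundRigidBodyPerformingTPMotion_2021}: a cut-off plus a Bogovski\u{\i} correction to split into a compactly supported interior piece and an exterior piece extended to $\R^3$, with the interior handled by bounded-domain Stokes theory and the exterior by Theorem \ref{thm:Stokes.rot.res.R3}. The divergence-correction via $\bogopr_A$, the accounting of the commutator terms, and in particular the observation that the boundedness of $\bogopr_A$ on negative Sobolev scales is what produces the $\snorm{s}\norm{\vvel}_{-1,q;\Omega_R}$ error term are all the right ideas.

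There is, however, a gap in how you treat the interior piece. You propose to apply Cattabriga $\LR{q}$-regularity for the Stokes system on $\Omega_{R_1}$ with forcing $\chi g-is\vvel^{(1)}-\tay\rotterm{\vvel^{(1)}}+\restterm'$, and then ``route $is\vvel^{(1)}$ through the $\WSR{-1}{q}$-form of the Stokes estimate.'' These two estimates live on different scales and do not combine: $\LR{q}$ data gives $\norm{\grad^2\vvel^{(1)}}_{q}+\norm{\grad\vpres^{(1)}}_{q}\leq C\norm{\text{data}}_{q}$, whereas $\WSR{-1}{q}$ data gives only $\norm{\grad\vvel^{(1)}}_{q}+\norm{\vpres^{(1)}}_{q}\leq C\norm{\text{data}}_{-1,q}$. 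If you split the forcing and send the $is\vvel^{(1)}$-part through the weaker form, you lose one derivative: the contribution of that piece to $\grad^2\vvel^{(1)}$ is uncontrolled. Since $\norm{\grad^2\vvel}_{q}$ sits on the left of \eqref{est:Stokes.rot.res.extdom.pert}, the argument as written does not close.

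The fix is not to move $is\vvel^{(1)}$ to the right at all, but to view the interior problem as a Stokes \emph{resolvent} problem on $\Omega_{R_1}$ with purely imaginary parameter $is$. On a bounded $\CR{1,1}$ domain the Stokes operator is invertible and generates a bounded analytic semigroup on $\LRsigma{q}(\Omega_{R_1})$, so the resolvent estimate
\[
\snorm{s}\norm{\vvel^{(1)}}_{q;\Omega_{R_1}}
+\norm{\grad^2\vvel^{(1)}}_{q;\Omega_{R_1}}
+\norm{\grad\vpres^{(1)}}_{q;\Omega_{R_1}}
\leq C\norml{\chi g-\tay\rotterm{\vvel^{(1)}}+\restterm'}_{q}
\]
holds uniformly in $s\in\R$. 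With this, the interior piece contributes no $\snorm{s}$-weighted local term at all; the only $\snorm{s}$-dependent term that survives on the right is $\norm{is\,w}_{q}$ from the Bogovski\u{\i} correction (in both the interior $\restterm'$ and the exterior $\restterm$), and that is precisely where your negative-scale estimate $\norm{w}_{q}\leq C\norm{\vvel\cdot\grad\chi}_{-1,q}\leq C\norm{\vvel}_{-1,q}$ is used, yielding the $\snorm{s}\norm{\vvel}_{-1,q}$ error term. A last cosmetic point: with your choice of cut-off the annulus $A$ lies outside $B_R$, so the right-hand side of the estimate you obtain involves $\Omega_{R_1}$ rather than $\Omega_{R}$; either relabel, or place the annulus strictly inside $B_R$.
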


\begin{proof}
Estimate \eqref{est:Stokes.rot.res.extdom.pert}
can be shown by a classical cut-off procedure.
We skip the details here
and refer to 
\cite[Lemma 5.7]{EiterKyed_ViscousFlowAroundRigidBodyPerformingTPMotion_2021},
where the related resolvent problem
\begin{equation}\label{sys:Oseen.rot.res}
\begin{pdeq}
is\vvel+\tay\rotterm{\vvel} 
+\rey\partial_1\vvel
- \Delta \vvel
+ \grad \vpres
 &= g
&& \tin \Omega, \\
\Div\vvel&=0
&& \tin \Omega, \\
\vvel&=0
&& \ton \partial\Omega
\end{pdeq}
\end{equation}
in the case $\rey>0$ and $s\in\tay\Z$ was considered.
In the present situation one may proceed in the very same way
by invoking estimates \eqref{est:Stokes.rot.res.R3}--\eqref{est:Stokes.rot.res.R3.fct}
from Theorem \ref{thm:Stokes.rot.res.R3}
as well as the uniqueness result from Lemma \ref{lem:Oseen.rot.res.uniqueness}.
\end{proof}

Based on a compactness argument,
we now show how to omit the error terms on the right-hand side of
\eqref{est:Stokes.rot.res.extdom.pert}
and to infer the estimate \eqref{est:Stokes.rot.res}.

\begin{lem}\label{lem:Stokes.rot.res.estimates}
In the situation of Lemma \ref{lem:Oseen.rot.res.estimates.pert} let $\rey=0$, 
$q\in(1,3/2)$
and $\tay\in(0,\tay_0]$ for some $\tay_0>0$.
Then $\np{\vvel,\vpres}$ satisfies estimate
\eqref{est:Stokes.rot.res}
for a constant $C=C(q,\Omega,\tay_0)>0$.
\end{lem}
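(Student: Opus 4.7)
The proof is by contradiction combined with a compactness--uniqueness argument. Suppose \eqref{est:Stokes.rot.res} fails. Then there exist sequences $\tay_n\in(0,\tay_0]$, $s_n\in\R$, $g_n\in\LR{q}(\Omega)$ and corresponding solutions $(\vvel_n,\vpres_n)$ to \eqref{sys:Stokes.rot.res} such that the left-hand side of \eqref{est:Stokes.rot.res} equals $1$ for every $n$, while $\|g_n\|_q\to 0$. In view of the perturbed estimate \eqref{est:Stokes.rot.res.extdom.pert}, it suffices to show that the three local error terms $\|\vvel_n\|_{1,q;\Omega_R}$, $\|\vpres_n\|_{q;\Omega_R}$ and $|s_n|\,\|\vvel_n\|_{-1,q;\Omega_R}$ all tend to zero, since this forces the left-hand side to vanish and contradicts the normalization.

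The normalization uniformly bounds $(\vvel_n)$ in $\WSR{2}{q}(\Omega_R)$ and $(\vpres_n)$ in $\WSR{1}{q}(\Omega_R)$, so Rellich's compact embedding allows us, after passing to a subsequence, to assume $\vvel_n\to\vvel^*$ strongly in $\WSR{1}{q}(\Omega_R)$ and $\vpres_n\to\vpres^*$ strongly in $\LR{q}(\Omega_R)$, together with $\tay_n\to\tay^*\in[0,\tay_0]$. We distinguish the cases where $(s_n)$ stays bounded or $|s_n|\to\infty$.

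If $s_n\to s^*$ in $\R$, passing to the distributional limit in \eqref{sys:Stokes.rot.res} identifies $(\vvel^*,\vpres^*)$ as a solution with $g=0$, $s=s^*$, $\tay=\tay^*$; the global bounds from LHS$=1$ transfer to $(\vvel^*,\vpres^*)$ by weak lower semicontinuity, so the hypotheses of Lemma \ref{lem:Oseen.rot.res.uniqueness} (with classical Stokes uniqueness covering $\tay^*=0$) are met. Therefore $\vvel^*=0$ and $\vpres^*$ is constant, and this constant vanishes because $\vpres^*\in\LR{3q/(3-q)}(\Omega)$ on the unbounded domain. If instead $|s_n|\to\infty$, the bound $\|is_n\vvel_n+\tay_n\rotterm{\vvel_n}\|_q\leq 1$ combined with the uniform $\WSR{1}{q}(\Omega_R)$-bound on $\vvel_n$ yields $\|s_n\vvel_n\|_{q;\Omega_R}\leq C$; dividing by $|s_n|$ gives $\vvel_n\to 0$ in $\LR{q}(\Omega_R)$, hence $\vvel^*=0$. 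Taking the divergence of \eqref{sys:Stokes.rot.res} and passing to the limit yields $\Delta\vpres^*=0$ in $\Omega$. Moreover, for any $h\in\CRci(\R^3)$ the identity $\int_{\Omega}s_n\vvel_n\cdot\grad h\,\dx=0$ (valid by $\Div\vvel_n=0$ and $\vvel_n|_{\partial\Omega}=0$) passes to the limit $\int_{\Omega}W^*\cdot\grad h\,\dx=0$, where $W^*$ denotes the weak $\LRloc{q}$-limit of $s_n\vvel_n$. Combined with the limit equation $\grad\vpres^*=-iW^*$, this yields the Neumann condition $\partial_n\vpres^*=0$ on $\partial\Omega$, and uniqueness for the exterior Neumann problem together with the decay at infinity implied by $\vpres^*\in\LR{3q/(3-q)}(\Omega)$ then forces $\vpres^*=0$.

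Finally, testing \eqref{sys:Stokes.rot.res} against $\phi\in\WSRN{1}{q'}(\Omega_R)$ and integrating by parts produces the direct bound
\[
|s_n|\,\|\vvel_n\|_{-1,q;\Omega_R}
\leq C\bp{\|g_n\|_q+(1+\tay_n)\|\vvel_n\|_{1,q;\Omega_R}+\|\vpres_n\|_{q;\Omega_R}},
\]
which shows $|s_n|\,\|\vvel_n\|_{-1,q;\Omega_R}\to 0$ once the first two error terms have been controlled. Combining these observations yields the desired contradiction. The main difficulty is the case $|s_n|\to\infty$, where the coefficient $is_n$ diverges so that the distributional limit of the equation cannot be taken in the usual sense, and $\vpres^*$ must be shown to vanish via its harmonicity together with a Neumann-type boundary condition inherited from the solenoidality and no-slip condition of $\vvel_n$.
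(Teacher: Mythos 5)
Your proof is correct and follows essentially the same strategy as the paper: a contradiction argument with a normalized sequence, local compactness (Rellich) to pass to limits on $\Omega_R$, a case split according to whether $s_n$ stays bounded or diverges, and a uniqueness result for the limit problem in each case to show that the local error terms in \eqref{est:Stokes.rot.res.extdom.pert} vanish. The only cosmetic differences are (i) for the term $|s_n|\|\vvel_n\|_{-1,q;\Omega_R}$ the paper uses the compact embedding $\LR{q}(\Omega_R)\embeds\WSR{-1}{q}(\Omega_R)$ to pass $is_n\vvel_n\to\wvel$ strongly in $\WSR{-1}{q}$, whereas you derive a direct bound by testing the equation against $\WSRN{1}{q'}(\Omega_R)$ functions — both work; and (ii) in the degenerate case $|s_n|\to\infty$ the paper reads $\wvel+\grad\vpres^*=0$ with $\wvel\in\LRsigma{q}(\Omega)$ as the Helmholtz decomposition of zero, while you phrase the same fact as $\Delta\vpres^*=0$ with a weak Neumann condition inherited from $\Div\vvel_n=0$ and $\vvel_n|_{\partial\Omega}=0$ — these are equivalent formulations of the same uniqueness statement.
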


\begin{proof}
We employ a contradiction argument
and assume that there exists no constant $C>0$ with 
the claimed properties such that \eqref{est:Stokes.rot.res} holds.
Then there exist sequences $\np{s_j}\subset\R$, $\np{\tay_j}\subset(0,\tay_0]$,
$\np{\vvel_j}\subset\WSRloc{2}{q}(\Omega)^3$, 
$\np{\vpres_j}\subset\WSRloc{1}{q}(\Omega)$,
$\np{g_j}\subset\LR{q}(\Omega)^3$
with 
\[
\dist(s_j,\tay_j\Z)\vvel_j, \,
is_j\vvel_j+\tay_j\rotterm{\vvel_j}, \,
\grad^2\vvel_j,\,
\grad\vpres_j
\in\LR{q}(\Omega)^3, 
\]
such that
\begin{equation}\label{eq:Stokes.rot.res.extdom.seqnorm}
\begin{aligned}
&\norm{\dist(s_j,\tay_j\Z)\vvel_j}_{q}
+\norm{is_j\vvel_j{+}\tay_j\rotterm{\vvel_j}}_{q}
+\norm{\grad^2 \vvel_j}_{q}
+\norm{\grad \vpres_j}_{q}
\\
&\qquad\qquad\qquad\qquad\qquad\qquad
+\norm{\grad\vvel_j}_{3q/(3-q)}
+\norm{\vvel_j}_{3q/(3-2q)}
+\norm{\vpres_j}_{3q/(3-q)}
=1
\end{aligned}
\end{equation}
and 
\[
\lim_{j\to\infty}\norm{g_j}_{q}
=0.
\]
Moreover, there exist sequences $(r_j)\subset(1,\infty)$, $(\overline r_j)\subset(1,\infty)$
such that $\vvel_j\in\LR{r_j}(\Omega)^3$, $\vpres_j\in\LR{\overline r_j}(\Omega)$ for all $j\in\N$.
Observe that the left-hand side of \eqref{eq:Stokes.rot.res.extdom.seqnorm}
is finite by Lemma \ref{lem:Oseen.rot.res.estimates.pert}
and can thus be normalized as in \eqref{eq:Stokes.rot.res.extdom.seqnorm}.
By the choice of a suitable subsequence,
we may assume that $\tay_j\to\tay\in[0,\tay_0]$,
$s_j\to s\in[-\infty,\infty]$ 
and $\dist(s_j,\tay_j\Z)\to \delta\in[-\tay_0/2,\tay_0/2]$ 
as $j\to\infty$.
For the moment fix $R>0$ with $\partial\Omega\subset\ball_R$.
In virtue of \eqref{eq:Stokes.rot.res.extdom.seqnorm}
and the estimate
\begin{equation}\label{est:isjvj}
\begin{aligned}
&\norm{is_j\vvel_j}_{q;\Omega_R}\\
&\quad\leq \norm{is_j\vvel_j+\tay_j\rotterm{\vvel_j}}_{q;\Omega_R}
+\norm{\tay_j\rotterm{\vvel_j}}_{q;\Omega_R}
\\
&\quad\leq \norm{is_j\vvel_j+\tay_j\rotterm{\vvel_j}}_{q} 
+ \tay_0\bp{\norm{\vvel_j}_{q;\Omega_R}
+ R\norm{\grad\vvel_j}_{q;\Omega_R}},
\end{aligned}
\end{equation}
the sequences 
$(\restriction{is_j\vvel_j}{\Omega_R})$, 
$(\restriction{\vvel_j}{\Omega_R})$ and
$(\restriction{\vpres_j}{\Omega_R})$
are bounded in 
$\LR{q}(\Omega_R)$, $\WSR{2}{q}(\Omega_R)$ and $\WSR{1}{q}(\Omega_R)$, respectively.
Upon selecting suitable subsequences,
we thus obtain the existence of $\wvel\in\LRloc{q}(\Omega)^3$, 
$\vvel\in\WSRloc{2}{q}(\Omega)^3$ 
and $\vpres\in\WSRloc{1}{q}(\Omega)$
such that
\[
is_j\vvel_j\wto\wvel 
\quad\tin\LR{q}(\Omega_R),
\qquad
\vvel_j\wto\vvel
\quad\tin\WSR{2}{q}(\Omega_R),
\qquad
\vpres_j\wto\vpres
\quad\tin\WSR{1}{q}(\Omega_R).
\]
By a Cantor diagonalization argument,
we obtain a subsequence such that the limit functions 
$\wvel$, $\vvel$, $\vpres$ are independent of the choice of $R$.
Moreover, 
the uniform bounds from \eqref{eq:Stokes.rot.res.extdom.seqnorm}
imply weak convergence of a subsequence in the corresponding spaces,
which implies
\[
\begin{aligned}
&\norm{\delta\vvel}_q
+\norm{\wvel+\tay\rotterm{\vvel}}_{q}
+\norm{\grad^2 \vvel}_{q}
+\norm{\grad \vpres}_{q}\\
&\qquad\qquad\qquad
+\norm{\grad\vvel}_{3q/(3-q)}
+\norm{\vvel}_{3q/(3-2q)}
+\norm{\vpres}_{3q/(3-q)}
\leq 1
\end{aligned}
\]
Firstly, we can now perform the limit $j\to\infty$ 
in \eqref{sys:Stokes.rot.res} 
($\vvel$, $\vpres$, $g$ is replaced with $\vvel_j$, $\vpres_j$, $g_j$)
and deduce
\begin{equation}\label{sys:Stokes.rot.res.limit}
\begin{pdeq}
\wvel +\tay\rotterm\vvel
- \Delta \vvel
+ \grad \vpres 
&= 0
&&\tin\Omega,
\\
\Div\vvel
&=0 
&& \tin\Omega,
\\
\vvel
&=0 
&& \tin\partial\Omega.
\end{pdeq}
\end{equation}
Secondly,
the compactness of the embeddings
$\WSR{2}{q}(\Omega_R)\embeds\WSR{1}{q}(\Omega_R)
\embeds\LR{q}(\Omega_R)\embeds\WSRN{-1}{q}(\Omega_R)$
implies the strong convergence
\[
is_j\vvel_j\to\wvel 
\quad\tin\WSR{-1}{q}(\Omega_R),
\qquad
\vvel_j\to\vvel
\quad\tin\WSR{1}{q}(\Omega_R),
\qquad
\vpres_j\to\vpres
\quad\tin\LR{q}(\Omega_R).
\]
By Lemma \ref{lem:Oseen.rot.res.estimates.pert}
we have \eqref{est:Stokes.rot.res.extdom.pert}
with $\vvel$, $\vpres$, $g$ replaced with $\vvel_j$, $\vpres_j$, $g_j$.
Employing \eqref{eq:Stokes.rot.res.extdom.seqnorm}
and passing to the limit $j\to\infty$ in this inequality 
leads to
\begin{equation}\label{est:Stokes.rot.res.extdom.limit.below}
1
\leq C \bp{
\np{1+\tay}\norm{\vvel}_{1,q;\Omega_{R}}+\norm{\vpres}_{q;\Omega_{R}}
+\norm{\wvel}_{-1,q;\Omega_{R}}
}.
\end{equation}
We now distinguish the following cases:
\begin{enumerate}[label=\roman*.]
\item
If $\snorm{s}<\infty$ and $\tay=0$, then $\wvel=is\vvel$
and \eqref{sys:Stokes.rot.res.limit}
simplifies to the classical Stokes resolvent problem with resolvent parameter $is$.
If $s\neq0$, this yields $is\vvel=\Delta\vvel-\grad\vpres\in\LR{q}(\Omega)$, 
so that $\vvel\in\WSR{2}{q}(\Omega)$.
Uniqueness in this functional framework is well known,
so that $\vvel=\grad\vpres=0$; see \cite{FarwigSohr1994} for example.
If $s=0$, then \eqref{sys:Stokes.rot.res.limit} 
is the steady-state Stokes problem and $\vvel\in\LR{3q/(3-2q)}(\Omega)$
implies $\vvel=\grad\vpres=0$
as follows from \cite[Theorem V.4.6]{GaldiBookNew} 
for example.
\item
If $\snorm{s}<\infty$ and $\tay>0$,
then $\wvel=is\vvel$ and \eqref{sys:Stokes.rot.res.limit}
coincides with \eqref{sys:Stokes.rot.res} with $g=0$.
Employing Lemma \ref{lem:Oseen.rot.res.uniqueness} and $\vvel\in\LR{3q/(3-q)}(\Omega)$,
we conclude $\vvel=\grad\vpres=0$.
\item
If $\snorm{s}=\infty$,
we note that for every $R>0$ such that $\partial\Omega\subset\ball_R$,
estimate \eqref{est:isjvj}
implies $\norm{\vvel_j}_{q;\Omega_R}\leq C_R /\snorm{s_j}$
for some $R$-dependent constant $C$.
Passing to the limit $j\to\infty$ 
and employing that $R$ was arbitrary,
we deduce $\vvel=0$ in $\Omega$, and \eqref{sys:Stokes.rot.res.limit}
reduces to $\wvel+\grad\vpres=0$,
which, in particular, yields $\wvel\in\LR{q}(\Omega)$.
Since we also have $\Div\wvel=0$ and $\restriction{\wvel}{\partial\Omega}=0$,
this equality 
corresponds to the Helmholtz decomposition in $\LR{q}(\Omega)$ of the zero function.
By uniqueness of this decomposition,
we conclude $\wvel=\grad\vpres=0$.
\end{enumerate} 
Finally, in all three cases we obtain
$\wvel=\vvel=\grad\vpres=0$,
which also yields $\vpres=0$ due to $\vpres\in\LR{3q/(3-q)}(\Omega)$.
In total, 
this is a contradiction to inequality \eqref{est:Stokes.rot.res.extdom.limit.below}
and finishes the proof.
\end{proof}

After the derivation of suitable \textit{a priori} estimates
in Lemma \ref{lem:Stokes.rot.res.estimates},
we next show the existence of a solution to the resolvent problem \eqref{sys:Stokes.rot.res}
for a sufficiently smooth right-hand side $g$.

\begin{lem}
\label{lem:Oseen.rot.res.exist.smooth}
Let $\Omega\subset\R^3$ be an exterior domain of class $\CR{3}$.
Let $\rey\geq0$, $\tay>0$, $s\in\R$ and $g\in\CRci(\Omega)^3$.
Then there exists a solution $\np{\vvel,\vpres}$ to \eqref{sys:Stokes.rot.res}
with
\[
\forall q\in(1,3/2): \
\np{\vvel,\vpres}\in\Xs(\Omega)\times\Ys(\Omega).
\]
\end{lem}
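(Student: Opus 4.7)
The plan is to combine the classical $\LR{q}$ resolvent theory for the Stokes operator with rotation, which supplies existence on the non-critical imaginary line $\set{is : s \notin \tay\Z}$, with a compactness/limit argument based on the uniform a priori estimate from Lemma~\ref{lem:Stokes.rot.res.estimates}, which transfers existence to the critical values $s \in \tay\Z$. The argument naturally splits into two cases.

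For $s \notin \tay\Z$, I would invoke the result of Geissert, Heck and Hieber~\cite{GeissertHeckHieber_LpTheoryNSFlowExteriorMovingRotatingObstacle_2006} that $A_\tay$ defined in \eqref{eq:def.Atay} generates a $\CR{0}$-semigroup on $\LRsigma{q}(\Omega)$, together with the spectral analysis of Farwig, Ne\v{c}asov\'a and Neustupa~\cite{FarwigNeustupa_SpectrumStokesTypeOpFlowAroundRotatingBody_2007, FarwigNecasovaNeustupa2007_EssSpectrumStokesOpRotatingBodyLq}, which together imply that $is$ lies in the resolvent set of $A_\tay$. Splitting $g = \projhOm g + \grad\phi$ via the Helmholtz decomposition (valid on $\LR{q}(\Omega)$ for any $q \in (1,\infty)$ since $g \in \CRci(\Omega)^3$), one sets $\vvel \coloneqq (is + A_\tay)^{-1}\projhOm g \in D(A_\tay) \subset \LRsigma{q}(\Omega) \cap \WSR{2}{q}(\Omega) \cap \WSRN{1}{q}(\Omega)$ and recovers the pressure $\vpres$ from \eqrefsub{sys:Stokes.rot.res}{1} up to a constant (with $\grad\phi$ absorbed). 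Sobolev embedding applied to $\vvel \in \WSR{2}{q}(\Omega)$ delivers the integrability $\vvel \in \LR{3q/(3-2q)}(\Omega)$ and $\grad\vvel \in \LR{3q/(3-q)}(\Omega)$ once $q \in (1,3/2)$, and the bound $\vpres \in \LR{3q/(3-q)}(\Omega)$ follows from the pressure Poisson equation $\Delta\vpres = \Div g - \tay\Div\rotterm{\vvel}$ via classical Calder\'on--Zygmund theory.

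For $s \in \tay\Z$, I would pick a sequence $s_n \to s$ with $s_n \notin \tay\Z$ and apply the preceding step to obtain solutions $(\vvel_n, \vpres_n)$ to \eqref{sys:Stokes.rot.res} at parameter $s_n$. By Lemma~\ref{lem:Stokes.rot.res.estimates} these satisfy \eqref{est:Stokes.rot.res} with a constant \emph{independent of $n$}, so weak compactness in the reflexive spaces on the left-hand side of \eqref{est:Stokes.rot.res} yields limits $(\vvel,\vpres) \in \Xs(\Omega) \times \Ys(\Omega)$ along a subsequence. Passing to the limit in the distributional formulation against $\CRci(\Omega)^3$ test functions then produces a solution at the critical value $s$, and pointwise vanishing on $\partial\Omega$ is preserved by trace continuity.

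The main obstacle is precisely this last limit passage. Because $is \in \sigma_{\mathrm{ess}}(A_\tay)$ for $s \in \tay\Z$ by \eqref{eq:spectrum.usual}, the individual norms $\norm{is_n\vvel_n}_q$ and $\norm{\tay\rotterm{\vvel_n}}_q$ need not be uniformly bounded as $\dist(s_n,\tay\Z) \to 0$; only the \emph{combination} $is_n\vvel_n + \tay\rotterm{\vvel_n}$ admits a uniform $\LR{q}$-bound via \eqref{est:Stokes.rot.res}. One must therefore carry the weak limit on this combination as a single unseparated quantity, using local strong convergence $\vvel_n \to \vvel$ in $\WSR{1}{q}(\Omega_R)$ from Rellich--Kondrachov to identify $\tay\rotterm{\vvel_n} \to \tay\rotterm{\vvel}$ distributionally, and then defining the $\LR{q}$-weak limit of $is_n\vvel_n + \tay\rotterm{\vvel_n}$ as $is\vvel + \tay\rotterm{\vvel}$ through the PDE itself. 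This non-separability of the two terms is exactly the feature encoded in the non-classical shape of the resolvent estimate \eqref{est:Stokes.rot.res}.
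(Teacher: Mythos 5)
Your approach diverges substantially from the paper's. The paper proves this lemma by an ``invading domains'' technique, following \cite[Lemma 5.11]{EiterKyed_ViscousFlowAroundRigidBodyPerformingTPMotion_2021}: it poses the resolvent problem on the bounded truncations $\Omega_m = \Omega\cap\ball_m$, derives $m$-uniform $\LR{2}$ \textit{a priori} bounds by formally testing against $\vvel_m$ and $\projhm\Delta\vvel_m$ (using \cite[Lemma 1]{Heywood1980} to control $\norm{\grad^2\vvel_m}_{2;\Omega_m}$ by $\norm{\projhm\Delta\vvel_m}_{2;\Omega_m}+\norm{\grad\vvel_m}_{2;\Omega_m}$ with an $m$-independent constant, which is where $\partial\Omega\in\CR{3}$ is used), constructs $(\vvel_m,\vpres_m)$ by a Galerkin scheme in the Stokes eigenbasis on $\Omega_m$, passes to the limit $m\to\infty$ with cut-offs, and finally bootstraps to the $\Xs\times\Ys$ regularity for all $q\in(1,3/2)$ via a cut-off argument and Lemma~\ref{lem:Oseen.rot.res.uniqueness}. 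Crucially, this route treats all $s\in\R$ uniformly and never invokes the spectral structure of $A_\tay$; it buys self-containedness at the cost of re-running a constructive scheme.

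Your scheme is structurally different: a dichotomy $s\notin\tay\Z$ versus $s\in\tay\Z$, with the first handled by abstract resolvent theory and the second by approximating $s_n\to s$ with $s_n\notin\tay\Z$ and using the $s$-uniform estimate \eqref{est:Stokes.rot.res} from Lemma~\ref{lem:Stokes.rot.res.estimates} to extract a weak limit. The second step is sound, and you correctly identify the delicate point: only the combination $is_n\vvel_n+\tay\rotterm{\vvel_n}$ is bounded, and one must pass to the limit in it as a block, identifying the weak $\LR{q}$-limit with $is\vvel+\tay\rotterm{\vvel}$ via local strong $\WSR{1}{q}$ convergence and the PDE.

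The genuine gap lies in the first case. Citing \cite{GeissertHeckHieber_LpTheoryNSFlowExteriorMovingRotatingObstacle_2006} for $\CR{0}$-semigroup generation and \cite{FarwigNecasovaNeustupa2007_EssSpectrumStokesOpRotatingBodyLq} for the essential spectrum \eqref{eq:spectrum.usual} does \emph{not} by itself imply $is\in\rho(A_\tay)$ when $s\notin\tay\Z$. Generation of a (non-analytic) $\CR{0}$-semigroup yields resolvent bounds only in a right half-plane, not on the imaginary axis, and the essential-spectrum characterization leaves open the possibility of isolated eigenvalues of $A_\tay$ at $is$ with $s\notin\tay\Z$. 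To close this you would need either the full spectrum characterization $\sigma(A_\tay)=\sigma_{\text{ess}}(A_\tay)$ (which must be located explicitly in the Farwig--Neustupa literature and for the relevant $q$), or an explicit Fredholm argument: outside $\sigma_{\text{ess}}(A_\tay)$ the operator $is+A_\tay$ is Fredholm of index zero (this depends on the definition of essential spectrum used in the cited reference, so must be checked), so injectivity suffices, and injectivity is exactly what Lemma~\ref{lem:Oseen.rot.res.uniqueness} provides since $D(A_\tay)\subset\LR{q}(\Omega)$. As written, the step is asserted rather than proved, and the combination of citations does not directly yield the conclusion; this is the point where your proof would break without further work, whereas the paper's Galerkin construction sidesteps the spectral analysis entirely.
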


\begin{proof}
Existence for the related resolvent problem \eqref{sys:Oseen.rot.res}
in the case  $s\in\tay\Z$ and $\rey>0$ was shown in
\cite[Lemma 5.11]{EiterKyed_ViscousFlowAroundRigidBodyPerformingTPMotion_2021}
in full detail based on energy estimates and an ``invading domains'' technique
together with $\LR{q}$ estimates similar to \eqref{est:Stokes.rot.res}.
The proof for \eqref{sys:Stokes.rot.res}
for general $s\in\R$, which means \eqref{sys:Oseen.rot.res} for $\rey=0$,
follows along the same lines,
which is why we only give a rough sketch here.

First of all, we choose $R>0$ such that $\partial\Omega\subset\ball_R$.
For $m\in\N$ with $m>R$ we first consider the resolvent problem \eqref{sys:Stokes.rot.res} on 
the bounded domain $\Omega_m=\Omega\cap\ball_m$,
that is, 
\[
\begin{pdeq}
is\vvel_m+\tay\rotterm{\vvel_m} 
- \Delta \vvel_m
+ \grad \vpres_m
 &= g
&& \tin\Omega_m, \\
\Div\vvel_m&=0
&& \tin\Omega_m, \\
\vvel_m&=0
&& \ton \partial\Omega_m.
\end{pdeq}
\]
By formally testing with the complex conjugates of $\vvel_m$ and $\projhm\Delta\vvel_m$,
where $\projhm$ denotes the Helmholtz projection in $\LR{2}(\Omega_m)$,
one can then derive the \textit{a priori} estimates
\[
\begin{aligned}
\norm{\vvel_m}_{6;\Omega_m}+\norm{\grad\vvel_m}_{2;\Omega_m}
&\leq C \norm{g}_{6/5},
\\
\norm{\projhm\Delta\vvel_m}_{2;\Omega_m}
&\leq C \bp{\norm{g}_{6/5}+\norm{g}_{2}},
\end{aligned}
\]
where the constant $C>0$ is independent of $m$; 
see the proof of \cite[Lemma 5.11]{EiterKyed_ViscousFlowAroundRigidBodyPerformingTPMotion_2021}
for further details.
In order to derive a uniform estimate on the full second-order norm,
we employ the inequality
\[
\norm{\grad^2\wvel}_{2;\Omega_m}
\leq C \np{\norm{\projhm\Delta\wvel}_{2;\Omega_m}
+\norm{\grad\wvel}_{2;\Omega_m}}
\]
for all $\wvel\in\WSRN{1}{2}(\Omega_m)^3\cap\WSR{2}{2}(\Omega_m)^3$ with $\Div\wvel=0$.
Since we assumed $\partial\Omega\in\CR{3}$,
the constant $C$ can be chosen independent of $m$;
see \cite[Lemma 1]{Heywood1980}.
Based on these formal \textit{a priori} estimates
and a basis of eigenfunctions of the Stokes operator on the bounded domain $\Omega_m$,
we can then apply a Galerkin method
to conclude the existence of a solution $\np{\vvel_m,\vpres_m}$,
which satisfies the \textit{a priori} estimate
\[
\norm{\vvel_m}_{6;\Omega_m}+\norm{\grad\vvel_m}_{1,2;\Omega_m}
\leq C \np{\norm{g}_{6/5}+\norm{g}_{2}},
\]
where $C$ is independent of $m$.
After multiplication with suitable cut-off functions,
one can then pass to the limit $m\to\infty$,
which leads to 
a solution $\np{\vvel,\vpres}$ to the original resolvent problem 
\eqref{sys:Stokes.rot.res}.
Finally, another cut-off argument that uses
the uniqueness properties from Lemma \ref{lem:Oseen.rot.res.uniqueness}
reveals that $\np{\vvel,\vpres}\in\Xs(\Omega)\times\Ys(\Omega)$ for all $q\in(1,3/2)$.
\end{proof}

After having shown existence of a solution for smooth data $g$,
we can now combine the previous lemmas
to conclude the proof of Theorem \ref{thm:Stokes.rot.res} by an approximation argument.

\begin{proof}[Proof of Theorem \ref{thm:Stokes.rot.res}]
In the case $\Omega=\R^3$ the statement follows from Theorem \ref{thm:Stokes.rot.res.R3}
above.
In the case of an exterior domain $\Omega\subset\R^3$,
the uniqueness statement is a consequence of Lemma \ref{lem:Oseen.rot.res.uniqueness},
and estimate \eqref{est:Stokes.rot.res}
was shown in Lemma \ref{lem:Stokes.rot.res.estimates}.
It thus remains to show existence of a solution for general $g\in\LR{q}(\Omega)^3$.
To this end, consider a sequence $(g_j)\subset\CRci(\Omega)^3$
converging to $g$ in $\LR{q}(\Omega)^3$.
By Lemma \ref{lem:Oseen.rot.res.exist.smooth}
there exists a solution 
$\np{\vvel_j,\vpres_j}\in\Xs(\Omega)\times\Ys(\Omega)$
to \eqref{sys:Stokes.rot.res} with $g=g_j$ for each $j\in\N$.
From
Lemma \ref{lem:Stokes.rot.res.estimates}
we infer that $\np{\vvel_j,\vpres_j}$ 
is a Cauchy sequence in 
the Banach space
$\Xs(\Omega)\times\Ys(\Omega)$.
Therefore, there exists a unique limit $\np{\vvel,\vpres}\in\Xs(\Omega)\times\Ys(\Omega)$,
which is a solution to \eqref{sys:Stokes.rot.res}.
This completes the proof.
\end{proof}

\section{The time-periodic problem}
\label{sec:tpprob}
Now we consider the time-periodic problem \eqref{sys:Stokes.rot.tp}
and prove the well-posedness results from Theorem \ref{thm:Stokes.rot.tp}.
For the proof, we reduce \eqref{sys:Stokes.rot.tp}
to the resolvent problems for each Fourier mode,
which can be solved by means of Theorem \ref{thm:Stokes.rot.res}. 
Due to the \textit{a priori} estimate \eqref{est:Stokes.rot.res},
these solutions constitute a summable sequence in a suitable space,
so that the associated Fourier series forms a solution to
the time-periodic problem \eqref{sys:Stokes.rot.tp}. 

\begin{proof}[Proof of Theorem \ref{thm:Stokes.rot.tp}]
Let $f\in\AR(\torus;\LR{q}(\Omega)^3)$.
Then there exist $f_k\in\LR{q}(\Omega)^3$, $k\in\Z$, such that
\[
f(t,x)=\sum_{k\in\Z}f_k(x)\e^{i\perf kt}.
\]
By Theorem \ref{thm:Stokes.rot.res}
there exists a solution $\np{\uvel_k,\upres_k}\in\Xk(\Omega)\times\Ys(\Omega)$
to
\begin{equation}\label{sys:Stokes.rot.res.uk}
\begin{pdeq}
i\perfs k\uvel_k+\tay\rotterm{\uvel_k} 
- \Delta\uvel_k
+ \grad \upres_k
 &= f_k
&& \tin \Omega, \\
\Div\uvel_k&=0
&& \tin\Omega, \\
\uvel_k&=0
&& \ton \partial\Omega,
\end{pdeq}
\end{equation}
which satisfies
\[
\begin{aligned}
&\norm{i\perfs k\uvel_k+\tay\rotterm{\uvel_k}}_{q}
+\norm{\grad^2 \uvel_k}_{q}
+\norm{\grad \uvel_k}_{q}
\\
&\qquad\qquad
+\norm{\grad \uvel_k}_{3q/(3-q)}
+\norm{\uvel_k}_{3q/(3-2q)}
+\norm{\upres_k}_{3q/(3-q)}
\leq C\norm{f_k}_{q}
\end{aligned}
\]
for $C$ as in Theorem \ref{thm:Stokes.rot.res}.
Since $C$ is independent of $k$,
the series
\begin{equation}\label{eq:Fseries.up}
\uvel(t,x)=\sum_{k\in\Z}\uvel_k(x)\e^{i\perf kt},
\qquad
\upres(t,x)=\sum_{k\in\Z}\upres_k(x)\e^{i\perf kt},
\end{equation}
define a pair 
$\np{\uvel,\upres}\in\XT(\torus\times\Omega)\times\YT(\torus\times\Omega)$,
which satisfies estimate \eqref{est:Stokes.rot.tp}
with the same constant $C$
and is a time-periodic solution to problem \eqref{sys:Stokes.rot.tp}.

It remains to prove the uniqueness statement.
For this purpose, consider a solution 
$\np{\uvel,\upres}\in\XT(\torus\times\Omega)\times\YT(\torus\times\Omega)$
to \eqref{sys:Stokes.rot.tp} 
with right-hand side $f=0$.
Then the Fourier coefficients 
$\np{\uvel_k,\upres_k}\in\Xk(\Omega)\times\Ys(\Omega)$, $k\in\Z$,
defined by \eqref{eq:Fseries.up},
are solutions to problem
\eqref{sys:Stokes.rot.res.uk} with $f_k=0$.
From Theorem \ref{thm:Stokes.rot.res}
we thus conclude $\np{\uvel_k,\upres_k}=\np{0,0}$
for all $k\in\Z$,
so that $\np{\uvel,\upres}=\np{0,0}$. 
This shows uniqueness of the solution and
completes the proof.
\end{proof}




 
\end{document}